\documentclass[10pt]{article}
\usepackage{amsmath,amssymb,amsfonts,amsthm,mathtools,epsfig}
\usepackage{tikz}
\usepackage{caption,subcaption,verbatim}
\usepackage{setspace}
\usepackage{algorithm}
\usepackage{algorithmic}
\usepackage{xurl}
\usepackage{hyperref}
\usepackage{ulem}
\usepackage{thmtools}

\usepackage{cancel}
\usepackage{comment}

\usepackage[bottom=1.5in]{geometry}

\usepackage[dvipsnames]{xcolor}
\newcommand{\cz}[1]{{\color{cyan} Carol: #1}}

\usetikzlibrary{arrows.meta,positioning,calc,decorations.pathreplacing}

\newtheorem{thm}{Theorem}
\newtheorem{theorem}[thm]{Theorem}
\newtheorem{lemma}[thm]{Lemma}
\newtheorem{observation}[thm]{Observation}
\newtheorem{corollary}[thm]{Corollary}

\newtheorem{proposition}[thm]{Proposition}

\theoremstyle{definition}

\newtheorem{problem}{Problem}
\newtheorem{question}[problem]{Question}
\newtheorem{conjecture}[problem]{Conjecture}

\date{ }

\title{\bf Counting Hamiltonian paths between prescribed vertices in traceable graphs with a forbidden induced subgraph}

\author{
Jorik Jooken \thanks{Department of Computer Science, KU Leuven Kulak, 8500 Kortrijk, Belgium.
}
\and Carol T. Zamfirescu
 \thanks{(1) Centre for Research in Mathematics and Data Science, Western Sydney University, Australia, and (2) Department of Mathematics, Computer Science and Statistics, Ghent University, 9000 Ghent, Belgium.\\ Email addresses:
 \protect\href{mailto:jorik.jooken@kuleuven.be}{\protect\nolinkurl{jorik.jooken@kuleuven.be}} and
\protect\href{mailto:czamfirescu@gmail.com}{\protect\nolinkurl{czamfirescu@gmail.com}}}
}

\date{}

\begin{document}
	
	\maketitle

	 \begin{abstract}
For graphs $G$ and $F$, we say that $G$ is \textit{$F$-free} if $F$ does not occur as an induced subgraph of $G$. This paper is concerned with the following question: Given an $F$-free graph $G$ having two vertices between which there exists at least one Hamiltonian path, how many Hamiltonian paths between these endpoints must exist (in terms of the order of $G$)? Our main result shows that there exists a sharp dichotomy. More precisely, we show that if $F$ is not an induced subgraph of $P_3+sP_1$ for any integer $s \geq 0$, then there exists an infinite family of $F$-free graphs having two vertices between which there exists a unique Hamiltonian path. On the other hand, we prove that if $F$ is an induced subgraph of $P_3+sP_1$ for some integer $s \geq 0$, then any $F$-free graph having two vertices between which there exists a Hamiltonian path contains exponentially many such paths between these two vertices. Our proofs use Ramsey-theoretic methods, a result on the existence of two vertices with low degree in graphs containing a unique Hamiltonian cycle, a path variant of Thomassen’s red-independent weakly green-dominating sets, and a structural analysis of Hamiltonian paths in $P_3+sP_1$-free graphs. As an algorithmic consequence we obtain that for every fixed $s \geq 1$, given a Hamiltonian $sP_1$-free graph together with a Hamiltonian cycle, one can decide in linear time whether a second Hamiltonian cycle exists and construct one if it does. 

			\vskip 3mm
		
				\noindent{\bf Keywords: Number of Hamiltonian paths, $F$-free graphs} \\
				\noindent \textit{2020 Mathematics Subject Classification: 05C38; 05C45; 05C69; 05C75} 
			\end{abstract}

\newpage

    \section{Introduction}

    
    A graph is \textit{Hamiltonian} if it contains a spanning cycle, referred to as a \textit{Hamiltonian cycle}. Similarly, a graph is \textit{traceable} if it contains a spanning path, referred to as a \textit{Hamiltonian path}. Hamiltonicity and traceability are among the most studied notions in graph theory. One is often not only interested in the existence of a Hamiltonian cycle or path, but also in the extent to which additional assumptions force richer behavior. For example, such assumptions may guarantee cycles of many different lengths, a second Hamiltonian cycle or many Hamiltonian paths between prescribed vertices. A famous example is a problem that was stated by Erd{\H{o}}s~\cite{E72} in the seventies: Given a Hamiltonian graph $G$ on $n$ vertices for which the independence number $\alpha(G) \leq s$, how large does one have to make $n$ (in terms of $s$) to guarantee that $G$ contains a cycle of each length in $\{3,4,\ldots,n\}$? Erd{\H{o}}s~\cite{E72} showed that $\Omega(s^4)$ vertices suffice, but conjectured that $\Omega(s^2)$ vertices should already be enough. This lower bound was systematically improved over the last 50 years (see e.g.~\cite{D20,KS10,LS12}) until the conjecture was recently proven by Dragani{\'c}, Munh{\'a} Correia and Sudakov~\cite{DMS24} in a strong form by showing that $(2+o(1))s^2$ vertices suffice.

    In the current paper, we will be interested in the number of pairwise distinct Hamiltonian paths between prescribed vertices (and also obtain results on Hamiltonian cycles as direct corollaries). A famous example that deals with a similar notion is Sheehan's conjecture~\cite{S75} stating that every Hamiltonian $4$-regular graph contains at least two Hamiltonian cycles. 
    
    For integers $s \geq 2$ and $t \geq 1$, a graph is an \textit{$[s,t]$-graph} if every subgraph induced by $s$ vertices contains at least $t$ edges. This notion generalizes the independence number: for a graph $G$, we have $\alpha(G) \leq s$ if and only if $G$ is an $[s+1,1]$-graph. Several recent results investigate Hamiltonian properties under such conditions. Cheng~\cite{C26} gives additional background and further Hamiltonian-type results on $[s,t]$-graphs; in particular, he proved that for every $k$-connected $[k+1,2]$-graph there exists a Hamiltonian path between each two distinct vertices, apart from one explicit exceptional family. Li and Zhan~\cite{LZ26} recently showed that every Hamiltonian $[4,2]$-graph on at least eight vertices contains a second Hamiltonian cycle. The results that we will present in the current paper have as a corollary that for all integers $s \geq 2$ and $t \geq 1$, a Hamiltonian $[s,t]$-graph contains exponentially many Hamiltonian cycles.

    The current paper is situated in another 
    popular setting that generalizes the bounded independence number setting. For graphs $G$ and $F$, we say that $G$ is \textit{$F$-free} if $G$ does not contain $F$ as an induced subgraph. We write the disjoint union of $G$ and $F$ as $G+F$ and the disjoint union of $s$ copies of $F$ is written as $sF$. The path on $n$ vertices is denoted by $P_n$. Therefore, for a graph $G$ we have $\alpha(G) \leq s$ if and only if $G$ is $(s+1)P_1$-free. For vertices $a,b \in V(G)$, an \textit{$ab$-path} is a path with endpoints $a$ and $b$.
    
    Our main result is as follows.

    \begin{restatable}{theorem}{SummaryTheorem}
\label{th:summary}
Let $F$ be a graph. If $F$ is not an induced subgraph of $P_3+sP_1$ for any integer $s \geq 0$, then there exists an infinite family of $F$-free graphs, each with two vertices between which there exists a unique Hamiltonian path. On the other hand, if $F$ is an induced subgraph of $P_3+sP_1$ for some integer $s \geq 0$, then there exists a constant $c_s=2^{\Theta(s)}$ such that every $F$-free graph $G$ having a Hamiltonian path between two vertices $a, b \in V(G)$ contains at least $2^{\left\lfloor \frac{n}{c_s} \right\rfloor}$ distinct Hamiltonian $ab$-paths.
\end{restatable}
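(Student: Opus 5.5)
The theorem has two directions. I would prove the negative one (construct families with a unique Hamiltonian $ab$-path) by a forbidden-subgraph case analysis, and the positive one (exponentially many paths) by a Ramsey argument combined with a switching argument for $P_3+sP_1$-free graphs.

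\textbf{Negative direction.} First I characterise when $F$ is not an induced subgraph of any $P_3+sP_1$. This holds exactly when $F$ satisfies one of the following:
\begin{itemize}
\item $F$ contains a cycle (in particular a triangle);
\item $F$ contains an induced $2P_2$;
\item $F$ contains a component that is a path on at least four vertices, or a claw $K_{1,3}$.
\end{itemize}
Equivalently: every induced subgraph of $P_3+sP_1$ is a linear forest with at most one edge-carrying component, that component having at most three vertices. Any other $F$ contains one of a short list of minimal obstructions, say $C_3$, $C_4$ (or more generally a cycle), $2P_2$, $P_4$, $K_{1,3}$. It therefore suffices to give, for each obstruction $F_0$, an infinite family of $F_0$-free graphs with a unique Hamiltonian $ab$-path, since $F_0$-free implies $F$-free. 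I would use three constructions.
\begin{itemize}
\item \emph{Cycles.} Take the path $P_n$ itself, with $a,b$ its ends. It contains no cycle, so it is free of every $F$ containing a cycle, and its Hamiltonian $ab$-path is trivially unique.
\item \emph{$2P_2$, $P_4$, $K_{1,3}$.} These contain no triangle, so I want dense graphs, built for instance from split graphs or complements of paths. A candidate for $2P_2$ and $P_4$: the threshold-like graph on $v_1,\dots,v_n$ with $v_iv_j$ an edge iff $j\ge i+2$ is odd-structured, or more concretely the split graph obtained from a clique $K=\{k_1,\dots,k_m\}$ and an independent set $I=\{i_0,\dots,i_m\}$ with nested neighbourhoods $N(i_t)=\{k_1,\dots,k_t\}$ (with a suitable shift). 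Nestedness gives threshold structure, hence $2P_2$-, $P_4$- and $C_4$-free. Choosing $a$ and $b$ appropriately forces the alternating path $i_0k_1i_1k_2\cdots$, and uniqueness follows by induction, peeling off the vertex of smallest degree.
\item \emph{Claw.} Take a suitable power of a path, or a line graph of a tree-like structure.
\end{itemize}
I would verify uniqueness in each case by a low-degree forcing induction.

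\textbf{Positive direction.} It suffices to take $F=P_3+sP_1$. Let $G$ be $F$-free with a Hamiltonian $ab$-path $P=v_1\cdots v_n$, where $v_1=a$ and $v_n=b$. My plan is to find about $n/c_s$ pairwise disjoint ``local switches''. A switch is a short segment of $P$ on which an alternative routing exists (for example a chord $v_iv_{i+2}$ combined with $v_{i+1}$ being adjacent to a suitable nearby vertex, or a crossing pair of chords $v_iv_j$, $v_{i+1}v_{j+1}$ localised in a window of bounded length). Disjoint switches can be applied independently, which yields $2^{\lfloor n/c_s\rfloor}$ paths.

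To find the switches, cut $P$ into consecutive windows of length $L=2^{\Theta(s)}$. I claim each window, or each pair of nearby windows, contains a switch. Suppose a window $W$ contains none. Then chords inside $W$ are severely restricted, and I would use this to exhibit an induced $P_3$ consisting of three consecutive path vertices $v_{i-1}v_iv_{i+1}$ with $v_{i-1}v_{i+1}\notin E$. Alongside it I would find $s$ further vertices, independent and anticomplete to the $P_3$, chosen via Ramsey's theorem from well-separated positions on the window: among about $R(s+1,\cdot)$ spaced candidates, a large clique would produce a switch by rerouting through the clique, so an independent set of size $s$ appears. Anticompleteness to the chosen $P_3$ is arranged by pigeonholing over the $2^{3}$ possible adjacency patterns, which is where the constant $2^{\Theta(s)}$ comes from.

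The main obstacle is making the notion of a switch both local and robust enough. Rerouting via a long chord may use vertices far away and interfere with other switches. I would first try to prove that in a $P_3+sP_1$-free graph any edge $v_iv_j$ with $|i-j|\ge2$ can be replaced by one of bounded span, using an absence-of-long-independent-sets argument along $P$. This is where the cited structural analysis (and the path analogue of Thomassen's red-independent weakly green-dominating sets, which produces a second path from a local configuration) would be invoked.
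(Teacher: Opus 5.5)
\textbf{Negative direction.} This part is fine. The nested-neighbourhood split graph is exactly the construction of Observation~\ref{obs:infFam3}; being a threshold graph, it is also $2P_2$- and $C_4$-free. The path $P_n$ can replace the paper's long cycles, and since $P_n$ is also claw-free, your unspecified ``power of a path'' construction for $K_{1,3}$ is not needed.

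\textbf{Positive direction: the gap.} Your Ramsey step (``a large clique gives a switch, so $s$ independent vertices appear'') is, once made precise, the paper's Theorem~\ref{th:sP1}, and it only settles $sP_1$-free graphs. For $P_3+sP_1$-free graphs an independent set is no contradiction: $K_{m,m}$ is $P_3+P_1$-free with $\alpha=m$. Everything therefore hinges on making the independent vertices anticomplete to some induced $P_3$, and pigeonholing over the $2^3$ adjacency patterns does not achieve this. It only produces many vertices with a \emph{common} pattern, and nothing in your argument excludes that this pattern is, say, ``adjacent to the middle vertex''. You give no mechanism that turns such adjacencies into a rerouting or selects a better $P_3$. (A constant $2^3$ also cannot be the source of $2^{\Theta(s)}$; that comes from a Ramsey number.) Your fallback, shortening long chords ``using an absence of long independent sets'', presupposes a bound on $\alpha$ that is false without further hypotheses. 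Invoking Thomassen's lemma without saying which set $X$ it is applied to is not an argument.

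\textbf{The missing idea.} The uniqueness hypothesis itself bounds the independence number: Theorem~\ref{th:mainTheorem} shows that a $P_3+sP_1$-free graph with a unique Hamiltonian $v_1v_n$-path has $\alpha(G)\le 5s+6$. The proof runs as follows.
\begin{itemize}
\item Take a maximum independent set $I=\{u_1,\dots,u_k\}$ maximising the sum of positions on $\mathfrak{P}$, and call $x\notin I$ bad if all its edges into $I$ are path edges.
\item Bad vertices have exactly one neighbour in $I$ and never succeed a vertex of $I$.
\item Non-bad path neighbours of $I$-vertices have at least $k-s-1$ non-path neighbours in $I$.
\item Let $B$ index the $I$-vertices with a bad predecessor. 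Unless $k\le |B|+s+3$, the interior vertices of $I$ without bad predecessor form a non-empty red-independent, weakly green-dominating set, contradicting uniqueness by Lemma~\ref{lem:PathRedGreen}.
\item Finally $|B|\le 4s+3$. This uses the maximal backward segments ending at the bad predecessors, the independence of their first vertices, and a Mantel triangle in an auxiliary graph that yields a three-segment rerouting.
\end{itemize}
Such a graph is then $(5s+7)P_1$-free, so Theorem~\ref{th:sP1} shows that every $P_3+sP_1$-free graph on $R(5s+7,5s+10)=2^{\Theta(s)}$ vertices with a Hamiltonian $ab$-path has a second one.

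\textbf{Locality.} Your worry here disappears once you apply this to the induced subgraph on each block of consecutive path vertices. That subgraph stays in the hereditary class (Lemma~\ref{lem:pathToCycles}), so every rerouting lives inside its own block. The paper's simpler alternative, deleting $N[x]$ for a low-degree vertex $x$ supplied by Abbasi--Jamshed and inducting on $s$, only gives $c_s=2^{\Theta(s\log s)}$, not the claimed $2^{\Theta(s)}$.
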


We also derive an algorithmic consequence for a search problem involving Hamiltonian cycles. More precisely, for every fixed integer $s \geq 1$, we show that the problem \textsc{Second Hamiltonian cycle in $sP_1$-free graphs} can be solved in linear time: given an $sP_1$-free graph $G$ together with a Hamiltonian cycle, the algorithm either constructs a second Hamiltonian cycle or correctly concludes that none exists.

    \section{Infinite families of $F$-free graphs having a unique Hamiltonian path between prescribed vertices}
    
    A graph $G$ is called \textit{uniquely Hamiltonian} if $G$ contains exactly one Hamiltonian cycle. We first observe the existence of two infinite families of uniquely Hamiltonian $F$-free graphs for particular graphs $F$. The next observation follows immediately by considering large enough cycles.
    \begin{observation}
    \label{obs:infFam1}
        Let $F$ be a graph that contains a cycle or a vertex with at least three neighbors. Then there exists an infinite family of $F$-free graphs with a unique Hamiltonian cycle.
    \end{observation}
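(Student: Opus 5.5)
The family will simply be the cycles $C_n$ for $n$ large. For each $n \geq 3$, the cycle $C_n$ is uniquely Hamiltonian: any Hamiltonian cycle of a graph on $n$ vertices has exactly $n$ edges, and $C_n$ has exactly $n$ edges, so the only Hamiltonian cycle is $C_n$ itself. It therefore remains to show that $C_n$ is $F$-free once $n$ is sufficiently large compared to $F$. I will choose $n > |V(F)|$ and argue that no induced subgraph of $C_n$ on at most $|V(F)|$ vertices can be isomorphic to $F$.

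I will treat the two hypotheses on $F$ separately. \emph{Case 1: $F$ has a vertex with at least three neighbours.} Every induced subgraph of $C_n$ has maximum degree at most $2$, so no induced subgraph can be isomorphic to $F$, whatever the value of $n$. \emph{Case 2: $F$ contains a cycle.} Take an induced subgraph $H$ of $C_n$ on $k \leq |V(F)| < n$ vertices. Because $H$ omits at least one vertex of $C_n$, it is an induced subgraph of $C_n$ minus a vertex, which is the path $P_{n-1}$. Hence $H$ is a forest, namely a disjoint union of paths. A forest cannot be isomorphic to a graph that contains a cycle, so $H \not\cong F$.

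In both cases $C_n$ is $F$-free for all $n > |V(F)|$. This gives the infinite family $\{C_n : n > |V(F)|\}$ of uniquely Hamiltonian $F$-free graphs. There is no real obstacle here. The only point needing care is that the size bound $n > |V(F)|$ is used only in Case 2, to rule out $F \cong C_n$ itself, since a proper induced subgraph of a cycle is acyclic.
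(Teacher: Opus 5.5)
Your proof is correct and follows the paper's approach: the paper simply states that the observation follows by considering sufficiently large cycles, which is exactly your family $\{C_n : n > |V(F)|\}$. Your case split (maximum degree at most $2$, and proper induced subgraphs of $C_n$ being acyclic) supplies the details the paper leaves implicit.
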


    \begin{observation}
    \label{obs:infFam2}
        There exists an infinite family of $2P_2$-free graphs with a unique Hamiltonian cycle.
    \end{observation}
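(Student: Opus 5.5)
The plan is to use split graphs. A graph whose vertex set splits into a clique $K$ and an independent set $I$ is automatically $2P_2$-free. Indeed, every edge has at least one endpoint in $K$, since $I$ is independent. So any two disjoint edges each contain a vertex of $K$, and those two vertices are adjacent. Hence two disjoint edges never induce $2P_2$. It therefore suffices to build, for every $m\ge 3$, a split graph with exactly one Hamiltonian cycle.

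\textbf{Construction.} Let $K=\{v_1,\dots,v_m\}$ be a clique and $I=\{u_1,\dots,u_m\}$ an independent set. Join each $u_i$ to exactly $v_i$ and $v_{i+1}$, with indices taken modulo $m$. Call the resulting graph $G_m$; it has $2m$ vertices and is a split graph, hence $2P_2$-free.

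\textbf{Existence.} The closed walk $v_1u_1v_2u_2\cdots v_mu_mv_1$ visits every vertex exactly once, so it is a Hamiltonian cycle of $G_m$.

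\textbf{Uniqueness.} Let $C$ be any Hamiltonian cycle of $G_m$.
\begin{enumerate}
\item Each $u_i$ has degree $2$, so both edges $u_iv_i$ and $u_iv_{i+1}$ must lie in $C$.
\item Consequently each clique vertex $v_i$ is already incident with two cycle edges of $C$, namely $u_{i-1}v_i$ and $u_iv_i$.
\item Since every vertex has degree exactly $2$ in $C$, no edge inside the clique $K$ can be used.
\item Thus $C$ consists of exactly the $2m$ forced edges, which is the cycle exhibited above.
\end{enumerate}

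Letting $m$ range over all integers $m\ge 3$ gives the required infinite family. No step here is a genuine obstacle; the only point needing care is the observation that the forced edges already saturate the degrees of all clique vertices, which rules out any use of clique edges.
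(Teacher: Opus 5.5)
Your proposal is correct and uses the paper's construction: your $G_m$ is the sun graph obtained from $K_m$ by adding a degree-two vertex on each edge of a Hamiltonian cycle. Both your $2P_2$-freeness argument (every edge meets the clique) and your uniqueness argument (degree-two vertices force their edges, which already saturate every clique vertex) match the paper's, with your saturation step making explicit what the paper leaves implicit.
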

    \begin{proof}
        Let $k \geq 3$ be an integer. Let $G$ be a graph on $2k$ vertices obtained by taking a clique $K_k$ on $k$ vertices, duplicating each edge from a Hamiltonian cycle $\mathfrak{C}$ in $K_k$ and subdividing once each edge in $\mathfrak{C}$ ($G$ is also known as a \textit{sun graph}). Since every Hamiltonian cycle must contain the two edges incident with a vertex of degree two, $G$ contains a unique Hamiltonian cycle. Moreover, $G$ is $2P_2$-free, since every edge in $G$ contains at least one vertex from $K_k$. \end{proof}







Since the above infinite families of graphs are uniquely Hamiltonian, they contain a linear number of pairs of vertices between which there exists a unique Hamiltonian path. For the next infinite family of graphs, we can only guarantee a constant number of such pairs of vertices.

\begin{observation}
    \label{obs:infFam3}
        There exists an infinite family of $P_4$-free graphs such that each graph in this family has two unordered pairs of vertices between which there exists a
unique Hamiltonian path.
    \end{observation}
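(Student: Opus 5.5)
We will use a threshold graph, since threshold graphs are cographs and hence $P_4$-free. For $k \geq 2$, build $G_k$ by adding vertices one at a time in the order
\[
u_0,\; u_1,\; w_1,\; u_2,\; w_2,\; \ldots,\; u_k,\; w_k .
\]
Each $u_i$ is added as an isolated vertex and each $w_j$ is added as a dominating vertex, adjacent to everything present so far. Hence $U=\{u_0,\dots,u_k\}$ is independent and $\{w_1,\dots,w_k\}$ is a clique. For $i \geq 1$, $u_i$ is adjacent exactly to the $w_j$ with $j\ge i$, and $u_0$ has the same neighbourhood as $u_1$. Since $G_k$ is built from $K_1$ by repeatedly taking a disjoint union with $K_1$ or a join with $K_1$, it is a cograph and thus $P_4$-free. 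The two pairs will be $\{u_k,u_0\}$ and $\{u_k,u_1\}$. Because $u_0$ and $u_1$ are false twins, the automorphism swapping them maps Hamiltonian $u_ku_0$-paths bijectively to Hamiltonian $u_ku_1$-paths. So it suffices to show that
\[
u_k w_k u_{k-1} w_{k-1}\cdots u_1 w_1 u_0
\]
is the unique Hamiltonian $u_ku_0$-path. This path exists because $u_i \sim w_i$ and $u_{i-1}\sim w_i$ for all $i$.

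Step 1 (alternation). A Hamiltonian path has $2k+1$ vertices, and $U$ is an independent set of size $k+1$ containing both endpoints. No two vertices of $U$ are consecutive on the path, so they occupy $k+1$ pairwise non-adjacent positions among $2k+1$. This forces them to occupy exactly the odd positions. Hence the path is $u_k, w_{b_1}, u_{a_2}, w_{b_2}, \ldots, w_{b_k}, u_0$, with $\{a_2,\dots,a_k\}=\{1,\dots,k-1\}$. Every interior $u$-vertex therefore has both of its $w$-neighbours on the path.

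Step 2 (forcing, the only real content). We show by downward induction on $i=k,k-1,\dots,1$ that $u_i$ is at position $2(k-i)+1$ and $w_i$ immediately follows it. For $i=k$: $u_k$ has the single neighbour $w_k$, so $b_1=k$. For the inductive step, suppose the prefix $u_k w_k \cdots u_{i+1} w_{i+1}$ is fixed. The neighbours of $u_i$ among the $w$'s are $w_i,\dots,w_k$. Of these, $w_{i+1},\dots,w_k$ are already used, and each already has both of its path-neighbours determined, except $w_{i+1}$, which still needs a successor. The interior vertex $u_i$ needs two $w$-neighbours on the path, and the only ones available are $w_{i+1}$ (as predecessor) and $w_i$ (as successor). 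So $u_i$ comes right after $w_{i+1}$ and is followed by $w_i$. After $w_1$ only $u_0$ remains. This proves uniqueness.

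The step to be careful about is Step 2. There one must check that the already-placed $w_j$ ($j>i+1$) are genuinely saturated, having a $u$ on each side. This follows from the alternation of Step 1, and it is why adding the extra vertex $u_0$, giving $k+1$ independent vertices, matters: it makes alternation forced with both endpoints in $U$. Letting $k$ range over all integers $k\ge 2$ gives the infinite family.
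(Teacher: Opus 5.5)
Your proof is correct and is essentially the paper's argument: deleting $u_0$ from your $G_k$ gives exactly the paper's split graph (under $u_i \leftrightarrow x_{k+1-i}$, $w_i \leftrightarrow y_{k+1-i}$), and your Step~2 is the paper's forcing induction, started at the degree-one vertex, where each interior independent vertex has only two unsaturated neighbours left. The differences are minor. You add the false twin $u_0$ so that the second pair $\{u_k,u_1\}$ follows from the first via the automorphism swapping $u_0$ and $u_1$; the paper instead works without $u_0$, takes the pairs $\{x_1,y_k\}$ and $\{x_1,x_k\}$, and repeats the forcing argument, so $u_0$ is not needed for the forcing itself, contrary to your closing remark. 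You also obtain $P_4$-freeness from the cograph characterization rather than by a direct check.
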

\begin{proof}
Let $k \geq 2$ and let $G$ be a graph on $2k$ vertices such that $V(G)$ can be partitioned into sets $X$ and $Y$, where $X := \{x_1,x_2,\ldots,x_k\}$ induces an independent set and $Y := \{y_1,y_2,\ldots,y_k\}$ induces a clique (i.e., $G$ is a split graph). The edge set of $G$ is given by $E(G):=\{y_iy_j~|~1\leq i <j \leq k\} \cup \{x_iy_j~|~1\leq j \leq i \leq k\}$. An example of the graph obtained by taking $k=4$ is given in Fig.~\ref{fig:P4FreeK4}.

\begin{figure}[ht]
\centering
\begin{tikzpicture}[
    x=1.3cm,y=1cm,
    block/.style={draw,rounded corners,minimum width=1.0cm,minimum height=.55cm},
    dot/.style={circle,fill,inner sep=1.4pt},
    >=Latex,scale=0.9
]

    \node[dot,label=left:{$x_1$}] (x1) at (0,0) {};
    \node[dot,label=left:{$x_2$}] (x2) at (0,-1.5) {};
    \node[dot,label=left:{$x_3$}] (x3) at (0,-3) {};
    \node[dot,label=left:{$x_4$}] (x4) at (0,-4.5) {};

    \node[dot,label=above left:{$y_1$}] (y1) at (2,0) {};
    \node[dot,label=above left:{$y_2$}] (y2) at (2,-1.5) {};
    \node[dot,label=above left:{$y_3$}] (y3) at (2,-3) {};
    \node[dot,label=above left:{$y_4$}] (y4) at (2,-4.5) {};

    \draw[ultra thick,blue] (x1) -- (y1) -- (x2) -- (y2) -- (x3) -- (y3) -- (x4) -- (y4);

    \draw (y1)--(y2)--(y3)--(y4);
    \draw (y1) to[bend left=22] (y3);
    \draw (y2) to[bend left=22] (y4);
    \draw (y1) to[bend left=22] (y4);

    \draw (x3)--(y1);

    \draw (x4)--(y1);
    \draw (x4)--(y2);
    \draw (x4)--(y3);
    
\end{tikzpicture}
\caption{The graph $G$ obtained by taking $k=4$. The unique Hamiltonian $x_1y_4$-path is marked in blue.}
\label{fig:P4FreeK4}
\end{figure}

The graph $G$ is clearly $P_4$-free, since any path on four vertices in $G$ contains two distinct vertices $y_i,y_j \in Y$ and at least one of the other two vertices must be adjacent to both $y_i$ and $y_j$. Therefore, these four vertices do not induce a $P_4$. Moreover, we claim that there exists a unique Hamiltonian $x_1y_k$-path, namely $x_1y_1x_2y_2 \ldots x_{k-1}y_{k-1}x_ky_k$ 
We prove by induction on~$i$ that for every $1 \leq i \leq k$, every Hamiltonian $x_1y_k$-path contains $x_1y_1x_2y_2 \ldots x_{i}y_{i}$ as a subpath. The base case $i=1$ is trivial, since $x_1$ has degree $1$. Now, assume $2 \leq i \leq k$ and assume the statement holds for $i-1$. Since $N(x_i) \setminus \{x_1,y_1,\ldots,x_{i-2},y_{i-2},x_{i-1}\}=\{y_{i-1},y_i\}$, we see that $x_1y_1x_2y_2 \ldots y_{i-1}x_{i}y_{i}$ must be a subpath of any Hamiltonian $x_1y_k$-path. By taking $i=k$, we conclude that there is a unique Hamiltonian $x_1y_k$-path. A nearly identical argument shows that there is a unique Hamiltonian $x_1x_k$-path.
\end{proof}


    These observations imply that the only graphs $F$ for which we do not know yet whether there exist infinitely many $F$-free graphs having two vertices between which there exists a unique Hamiltonian path are those which are induced subgraphs of $P_3+sP_1$ for some non-negative integer~$s$. This is the focus of the next section. 
    
    \section{Exponentially many Hamiltonian paths between prescribed vertices in traceable $F$-free graphs}\label{sec:expCrudeBound}

    A class of graphs $\mathcal{G}$ is called \textit{hereditary} if for every graph $G \in \mathcal{G}$, every induced subgraph of $G$ is also in $\mathcal{G}$. In particular, it is worth noting that for every graph $F$, the class of $F$-free graphs is hereditary. We first introduce a lemma that relates Hamiltonian paths in small graphs in a hereditary class of graphs with Hamiltonian paths in large such graphs.
    
    \begin{lemma}
    \label{lem:pathToCycles}
        Let $\mathcal{G}$ be a hereditary class of graphs and let $G \in \mathcal{G}$ be a graph on $n$ vertices having a Hamiltonian path between $v_1, v_n \in V(G)$. If there exist integers $n'$ and $c$ such that every graph $G' \in \mathcal{G}$ on $n'$ vertices having a Hamiltonian path between two distinct vertices $a, b \in V(G')$ has in fact at least $c$ Hamiltonian $ab$-paths, then $G$ has at least $c^{\left\lfloor n/n'\right\rfloor}$ distinct Hamiltonian $v_1v_n$-paths. 
    \end{lemma}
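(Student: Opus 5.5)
Fix the Hamiltonian path $P = v_1v_2\ldots v_n$ of $G$. Put $k := \lfloor n/n' \rfloor$ and cut $P$ into $k$ consecutive, vertex-disjoint segments of $n'$ vertices each, $P_i = v_{(i-1)n'+1}\ldots v_{in'}$ for $1 \le i \le k$. The leftover vertices $v_{kn'+1},\ldots,v_n$, if any, form a tail segment that we keep fixed. We may assume $n' \ge 2$ and $c \ge 1$; otherwise the claim is trivial, since $c^{k}\le 1$ and $P$ itself is one Hamiltonian $v_1v_n$-path, or the hypothesis is vacuous.

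Let $G_i := G[V(P_i)]$ be the subgraph induced by the vertex set of the $i$-th segment. Because $\mathcal{G}$ is hereditary, $G_i \in \mathcal{G}$. It has exactly $n'$ vertices, and $P_i$ is a Hamiltonian path of $G_i$ between its two distinct endpoints $a_i = v_{(i-1)n'+1}$ and $b_i = v_{in'}$. The hypothesis therefore gives a set $\mathcal{Q}_i$ of at least $c$ distinct Hamiltonian $a_ib_i$-paths in $G_i$.

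For every choice $(Q_1,\ldots,Q_k) \in \mathcal{Q}_1\times\cdots\times\mathcal{Q}_k$, we form a new walk by replacing each $P_i$ in $P$ with $Q_i$. We keep all the connecting edges $b_ia_{i+1} = v_{in'}v_{in'+1}$ of $P$, and the tail. Each $Q_i$ starts at $a_i$, ends at $b_i$ and spans exactly $V(P_i)$. Since the segments partition $V(G)$ together with the tail, the result is a Hamiltonian $v_1v_n$-path of $G$; it uses only edges of $G$ because each $G_i$ is an induced subgraph.

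It remains to check that distinct tuples give distinct paths. The position of each vertex along the path from $v_1$ is preserved blockwise: the vertices at positions $(i-1)n'+1,\ldots,in'$ are exactly $V(P_i)$, traversed in the order of $Q_i$. So the path, read as a sequence from $v_1$, determines every $Q_i$. Injectivity gives at least $c^{k} = c^{\lfloor n/n'\rfloor}$ distinct Hamiltonian $v_1v_n$-paths.

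This argument is short, and the only step needing care is this injectivity check. The ordering argument is needed because the same edge set could a priori arise from different tuples, so one cannot compare edge sets alone. Reading the path as an ordered sequence from the fixed endpoint $v_1$ settles the point.
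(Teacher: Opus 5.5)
Your proof is correct and follows the same route as the paper: cut the path into $\lfloor n/n'\rfloor$ blocks of $n'$ consecutive vertices, use heredity to apply the hypothesis to each induced block, and swap in alternative subpaths independently; your explicit treatment of the tail and of injectivity (reading the path as a sequence from $v_1$) spells out what the paper leaves implicit. One slip concerns the degenerate cases: for $n'=1$ a vacuous hypothesis does not make the claim trivial, since the lemma as stated is then false for $c\ge 2$ (and $c^{k}\le 1$ also fails for negative $c$ with $k$ even), so $n'\ge 2$ and $c\ge 0$ should be taken as tacit assumptions, as they are in the paper.
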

    \begin{proof}
    If $n<n'$, the statement is trivial. Otherwise, let $\mathfrak{P} :=  v_1v_2 \ldots v_n$ be a Hamiltonian $v_1v_n$-path in $G$ and let $G' := G[\{v_1,v_2,\ldots,v_{n'}\}]$. Since $G \in \mathcal{G}$ and $\mathcal{G}$ is hereditary, we also have $G' \in \mathcal{G}$. Therefore, $G'$ has at least $c$ Hamiltonian $v_1v_{n'}$-paths (and the same conclusion holds for any graph induced by $n'$ consecutive vertices in $\mathfrak{P}$). Now, one obtains $c^{\left\lfloor \frac{n}{n'}\right\rfloor}$ Hamiltonian $v_1v_n$-paths in $G$ by repeatedly replacing $n'$ consecutive vertices in $\mathfrak{P}$ by one of the $c$ Hamiltonian paths between the endpoints of the $n'$ consecutive vertices.
    \end{proof}

    We now recall a theorem due to Abbasi and Jamshed~\cite{AJ06}. Throughout this paper, let $\lambda := \frac{1}{\log_2(3)-1} \approx 1.7095$.

\begin{theorem}[Abbasi and Jamshed~\cite{AJ06}, Corollary 3]
\label{th:lowDegreeVertexForUniqueHamCycle}
  Let $G$ be a uniquely Hamiltonian graph on $n$ vertices. Then $G$ contains two vertices of degree at most $\lambda \log_2(n)+4$.
\end{theorem}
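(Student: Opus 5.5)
The plan is to argue by contradiction through the contrapositive. Suppose $G$ has a Hamiltonian cycle $\mathfrak{C}$ and at most one vertex of degree at most $d := \lambda \log_2(n)+4$. I want to produce a second Hamiltonian cycle. Let $x$ be the exceptional low-degree vertex if there is one; otherwise let $x$ be arbitrary. Let $y$ be a neighbour of $x$ on $\mathfrak{C}$. Deleting the edge $xy$ from $\mathfrak{C}$ gives a Hamiltonian path $\mathfrak{P}$ starting at $x$. Every Hamiltonian path starting at $x$ whose other endpoint is adjacent to $x$ closes into a Hamiltonian cycle. So it suffices to show that there are at least two Hamiltonian $x$-paths that end at neighbours of $x$ and yield distinct cycles.

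The main tool is Thomason's lollipop (rotation) graph. Its vertices are the Hamiltonian paths starting at $x$. Two such paths are joined if one arises from the other by a P\'osa rotation at the free end: add an edge from the endpoint $z$ to an earlier vertex $w$ and delete the edge from $w$ to its successor. The degree of a path in this graph is $\deg(z)-1$. Paths ending at a neighbour of $x$ have one fewer available rotation, in the sense that the edge $zx$ closes a cycle instead of rotating. This shifts parities, and a handshake/parity argument pairs the cycle-closing paths. That alone gives Thomason-type conclusions only when degrees are odd, though, so I would instead follow the Bondy--Jackson refinement and run a counting argument.

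For the counting, I would prove by induction the following statement. Whenever a graph $H$ has a Hamiltonian path from $x$ and every vertex other than $x$ has degree at least $k$ in $H$, the number of distinct endpoints, or more precisely the number of distinct cycle-closing configurations, reachable by rotations grows like $(3/2)^{k}$ up to a constant shift. The inductive step fixes the current endpoint $z$ of degree at least $k$. It then considers rotations through the last few neighbours of $z$ along the path and deletes an appropriate vertex or edge to pass to a smaller instance with the degree bound decreased by one. The target is to show that each step splits into three subinstances while the order of $H$ is divided by at most two. This yields $n \ge 2^{(k-4)/\lambda}$ unless two distinct Hamiltonian cycles appear, since $(3/2)^{k-4}=2^{(k-4)(\log_2 3-1)}$. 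Plugging in $k>\lambda\log_2(n)+4$ gives a contradiction with the order, so a second Hamiltonian cycle exists.

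The main obstacle is this recursive step. I must ensure that the deletions preserve the minimum-degree hypothesis away from the single exceptional vertex. I must also ensure that the three branches really produce Hamiltonian cycles different from $\mathfrak{C}$, rather than recovering $\mathfrak{C}$ itself. The additive constant $4$ comes from the base cases and from the two vertices, $x$ and the endpoint, whose degrees are not controlled at each stage. I would first try to handle the branching by choosing the rotation pivot among the neighbours of $z$ nearest to $z$ along the path, so that the resulting segments are short and disjoint.
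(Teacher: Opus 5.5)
The paper does not prove this statement; it quotes it from Abbasi and Jamshed and uses it as a black box. Judged on its own, your proposal has a genuine gap. All of the quantitative content is deferred to the ``recursive step'', which you leave open. The inductive statement that step is meant to establish is also not correct as formulated.

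Read literally, it says: if every vertex other than $x$ has degree at least $k$, then the number of endpoints reachable by rotations is at least $(3/2)^{k-c}$. This fails for $K_{k+1}$ with $k$ large, since that graph has at most $k$ possible endpoints. Beyond that:
\begin{itemize}
\item The hypothesis that $G$ is uniquely Hamiltonian, which is what must eventually be contradicted, never enters the induction.
\item Nothing guarantees that after you ``delete an appropriate vertex or edge'' the subinstance still has a Hamiltonian path from $x$, let alone inherits uniqueness.
\item ``Three subinstances while the order is divided by at most two'' does not give a recursion like $f(k)\ge \tfrac32 f(k-1)$ for the minimum order $f(k)$. For that you would need, say, three subinstances each of order at least $f(k-1)$ covering every vertex at most twice, and the sketch does not control how the branches overlap.
\end{itemize}

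More fundamentally, counting reachable endpoints is the wrong invariant. P\'osa rotations produce many endpoints, but a second Hamiltonian cycle needs an endpoint adjacent to $x$ that does not give back $\mathfrak{C}$. You take $x$ to be the exceptional vertex, which may have degree $2$, so this means hitting one prescribed vertex, and a large endpoint set does not force that.

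What is missing is the device that repairs Thomason's parity argument when degrees are even: Thomassen's lemma, which the paper states as Lemma~\ref{lem:CycleRedGreen}. With it, it suffices to exhibit a nonempty red-independent, weakly green-dominating set $X$. Keeping the two cycle-neighbours of the exceptional vertex out of $X$ makes its low degree irrelevant.

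The known proofs (Bondy and Jackson, sharpened by Abbasi and Jamshed) choose $X$ probabilistically or by counting. A fixed vertex $w$ then fails, meaning it has a red neighbour in $X$ but no green neighbour in $X$, with probability of order $(2/3)^{\deg(w)-O(1)}$. A union bound over the $n$ vertices succeeds once all but one vertex have degree above $\log_{3/2}n+O(1)=\lambda\log_2 n+O(1)$. That is where both $n$ and the base $3/2$ come from, not from a branching recursion on rotations.
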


It is not hard to show the following consequence for graphs with a unique Hamiltonian path between two vertices.

\begin{lemma}
\label{lem:lowDegreeVertexForUniqueHamPath}
    Let $G$ be a graph on $n$ vertices and let $a,b \in V(G)$ be two distinct vertices such that $G$ has a unique Hamiltonian $ab$-path. Then $G$ contains a vertex of degree at most $\lambda \log_2(n+1)+4$.
\end{lemma}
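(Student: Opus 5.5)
We reduce to Theorem~\ref{th:lowDegreeVertexForUniqueHamCycle} by turning Hamiltonian $ab$-paths into Hamiltonian cycles through a new vertex. Let $H$ be obtained from $G$ by adding a new vertex $z$ adjacent exactly to $a$ and $b$. Then $H$ has $n+1$ vertices. Every Hamiltonian cycle of $H$ passes through $z$, which has degree two, so it uses both edges $za$ and $zb$. Deleting $z$ therefore gives a bijection between Hamiltonian cycles of $H$ and Hamiltonian $ab$-paths of $G$. Since $G$ has a unique Hamiltonian $ab$-path, $H$ is uniquely Hamiltonian.

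Applying Theorem~\ref{th:lowDegreeVertexForUniqueHamCycle} to $H$ yields two distinct vertices $u,w \in V(H)$ with $\deg_H(u),\deg_H(w) \leq \lambda \log_2(n+1)+4$. At most one of them is $z$, so at least one of them, say $u$, lies in $V(G)$. Passing from $H$ to $G$ only removes the edges at $z$, so $\deg_G(u) \leq \deg_H(u)$. Hence $\deg_G(u) \leq \lambda\log_2(n+1)+4$, which proves the lemma.

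There is no real obstacle. One must check that $H$ is a simple graph: $z$ is new and $a\neq b$, so $z$ has two distinct neighbours and no multi-edge arises. One must also note why the theorem's guarantee of \emph{two} low-degree vertices matters: one of them might be the auxiliary vertex $z$ itself, and the second one is what guarantees a low-degree vertex in $G$.
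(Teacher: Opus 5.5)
Your proposal is correct and follows the paper's own proof: add a degree-two vertex adjacent to $a$ and $b$ to get a uniquely Hamiltonian graph on $n+1$ vertices, then apply Theorem~\ref{th:lowDegreeVertexForUniqueHamCycle}, of whose two low-degree vertices at least one lies in $G$. You also spell out the step $\deg_G(u)\leq\deg_H(u)$, which the paper leaves implicit.
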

\begin{proof}
    Let $G'$ be the graph obtained by adding a vertex $c$ to $G$ and adding the edges $ac$ and $bc$. Since $c$ has degree 2, every Hamiltonian cycle in $G'$ must contain the edges $ac$ and $bc$. Therefore, $G'$ has a unique Hamiltonian cycle. By Theorem~\ref{th:lowDegreeVertexForUniqueHamCycle}, $G'$ contains two vertices of degree at most  $\lambda \log_2(n+1)+4$. Therefore, $G$ contains a vertex of degree at most $\lambda \log_2(n+1)+4$.
\end{proof}

We now prove that large $P_3+sP_1$-free graphs containing a Hamiltonian path between two vertices must contain at least two such paths.

\begin{theorem}
Let $s \geq 0$ be an integer and let $G$ be a $P_3+sP_1$-free graph on $n$ vertices such that there is a unique Hamiltonian path between $v_1, v_n \in V(G)$, namely $\mathfrak{P} := v_1 v_2 \ldots v_n$. Then there exists a constant $M_s$ (only depending on $s$) such that $n \leq M_s$. In particular, one may take $M_0 := 3$ and $M_s := \left\lfloor 7\lambda(M_{s-1}+1)\log_2(7\lambda(M_{s-1}+1))\right\rfloor$ for $s \ge 1$.
\end{theorem}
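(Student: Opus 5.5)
We argue by induction on $s$, using Lemma~\ref{lem:lowDegreeVertexForUniqueHamPath} to obtain a low-degree vertex and then the induction hypothesis on its non-neighbourhood.

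\textbf{Base case $s=0$.} Here $G$ is $P_3$-free, so every component of $G$ is a clique. Since $G$ is traceable, it is connected, so $G=K_n$. For $n\ge 4$ there are at least two Hamiltonian $v_1v_n$-paths: permute the interior vertices $v_2,\dots,v_{n-1}$, of which there are at least two. Hence $n\le 3=M_0$.

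\textbf{Inductive step $s\ge1$.} Assume $n > M_s$. By Lemma~\ref{lem:lowDegreeVertexForUniqueHamPath} there is a vertex $v$ with $\deg(v)\le \lambda\log_2(n+1)+4$.

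We then find an induced $P_3$ near $v$. Pick a neighbour $u$ of $v$ on $\mathfrak P$; it exists since $n\ge 2$. The plan is to locate a short window of consecutive path vertices around $v$ that contains an induced $P_3$ using $v$.

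We cannot simply delete $N[\,\cdot\,]$ of three arbitrary vertices, because the neighbourhoods of those vertices may be large. What we control is $N(v)$. So we take $v$ as one end or the middle of the $P_3$. The other two vertices $x,y$ should be chosen within distance $3$ of $v$ along $\mathfrak P$, with $x$ adjacent to $v$, $y$ adjacent to $x$, and $y\notin N(v)$. We then need to bound the neighbourhoods of $x$ and $y$ as well.

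To arrange this, apply the argument inside a window. Take $W$ consisting of $7(M_{s-1}+1)$ consecutive vertices of $\mathfrak P$. The induced subgraph $G[W]$ has a unique Hamiltonian path between the window's endpoints, since a second one would splice into $\mathfrak P$. Apply Lemma~\ref{lem:lowDegreeVertexForUniqueHamPath} inside $G[W]$ to get $v\in W$ with few neighbours in $W$. The subgraph $G[W]$ is not complete, because $|W|\ge 4$ and the $s=0$ analysis shows it is not a clique. Therefore some vertex at distance $2$ from $v$ in $G[W]$ exists, which gives an induced $P_3$ $v\,x\,y$.

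Let $R:=W\setminus N[\{v,x,y\}]$. Then $G[R]$ must be $P_3+(s-1)P_1$-free. Otherwise, add $v\,x\,y$ to the copy of $P_3+(s-1)P_1$ in $G[R]$ and obtain $P_3+sP_1$ in $G$.

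Moreover, $R$ splits into subpaths of $\mathfrak P$. Each maximal run of consecutive vertices in $R$ has the unique-path property, so by induction each run has length at most $M_{s-1}$. The number of runs is at most $|N[\{v,x,y\}]\cap W|+1$. Hence
\[
|W|\le |N[\{v,x,y\}]\cap W|\,(M_{s-1}+1)+M_{s-1}.
\]

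\textbf{Main obstacle.} Bounding $|N(x)\cap W|$ and $|N(y)\cap W|$ is the hard step. Only $v$ is guaranteed to have low degree. I would try to fix this by choosing $x,y$ themselves via repeated applications of Lemma~\ref{lem:lowDegreeVertexForUniqueHamPath}, or by letting the $P_3$ be $v$ together with two far vertices chosen so that the relevant runs in $R$ avoid the high-degree neighbours. This is presumably where the factor $7$ in $M_s$ comes from: a constant number of low-degree vertices, each of degree $O(\lambda\log n)$.

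Once every vertex of the $P_3$ has degree $O(\lambda\log_2 n)$, the displayed inequality becomes
\[
n\lesssim 7\lambda(M_{s-1}+1)\log_2 n .
\]
Solving this inequality gives $n\le 7\lambda(M_{s-1}+1)\log_2(7\lambda(M_{s-1}+1))=M_s$, which contradicts $n > M_s$.
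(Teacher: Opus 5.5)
\paragraph{There is a genuine gap.} It sits exactly at the step you flag as the main obstacle: you never bound $|N(x)\cap W|$ or $|N(y)\cap W|$, and nothing in your setup can. Only the vertex $v$ supplied by Lemma~\ref{lem:lowDegreeVertexForUniqueHamPath} has controlled degree. Its neighbour $x$ and the vertex $y$ at distance two may have degree linear in $n$. Repeated applications of the lemma produce low-degree vertices with no reason to form an induced $P_3$ with $v$. So your inequality $|W|\le |N[\{v,x,y\}]\cap W|(M_{s-1}+1)+M_{s-1}$ yields nothing.

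\paragraph{The missing idea.} You do not need a $P_3$ at all. Since $s\ge 1$, the low-degree vertex $v$ can serve as one of the $s$ isolated vertices of $P_3+sP_1$. It has no neighbours in $G-N[v]$, so any induced $P_3+(s-1)P_1$ in $G-N[v]$ together with $v$ would be an induced $P_3+sP_1$ in $G$. Hence $G-N[v]$ is $P_3+(s-1)P_1$-free. (Deleting $N[\{v,x,y\}]$ would in fact give $sP_1$-freeness of the remainder, which shows the $P_3$ is aimed at the wrong reduction.) Removing the $\deg(v)+1$ vertices of $N[v]$ splits $\mathfrak{P}$ into at most $\deg(v)+2$ subpaths. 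Each induces a $P_3+(s-1)P_1$-free graph with a unique Hamiltonian path between its ends, by splicing. By induction each has at most $M_{s-1}$ vertices, so
\[
n\le 1+\deg(v)+(\deg(v)+2)M_{s-1}\le \bigl(\lambda\log_2(n+1)+6\bigr)(M_{s-1}+1).
\]
No window is needed; apply the lemma to $G$ itself.

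\paragraph{The constant $7$.} Your account of it is also off. It does not count low-degree vertices; it is slack used in solving the displayed inequality. With $r:=7\lambda(M_{s-1}+1)$, one checks that $h(t)=t-(\lambda\log_2(t+1)+6)(M_{s-1}+1)$ is increasing for $t\ge r\log_2 r$. One also checks that $h(r\log_2 r)\ge (M_{s-1}+1)\bigl(6\lambda\log_2 r-\lambda\log_2\log_2 r-\lambda-6\bigr)>0$. The surplus $6\lambda\log_2 r$ absorbs the additive $6$, the $\lambda$, and the $\log\log$ term. Conversely, an inequality of the shape in your last display, $n\le 7\lambda(M_{s-1}+1)\log_2 n$, holds at $n=r\log_2 r$, because $r\log_2(r\log_2 r)>r\log_2 r$. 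So it would not by itself give the stated $M_s$.

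\paragraph{Base case.} Your base case is fine: a connected $P_3$-free graph is complete, and $K_n$ with $n\ge 4$ has several Hamiltonian $v_1v_n$-paths. The paper instead uses the forced edges $v_1v_3$ and $v_2v_4$ and the swap to $v_1v_3v_2v_4$; both work.
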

\begin{proof}

We first show that we may take $M_0=3$. Suppose $n \geq 4$. Since $G$ is $P_3$-free, the triples
$\{v_1,v_2,v_3\}$ and $\{v_2,v_3,v_4\}$ force the edges $v_1v_3$ and
$v_2v_4$. Therefore, replacing the subpath
$v_1 v_2 v_3 v_4$ by $v_1 v_3 v_2 v_4$ yields a second Hamiltonian $v_1v_n$-path, a contradiction.

Hence, we have shown that the statement holds for $s=0$ and we now prove the remaining cases by induction on $s$. Let $s \geq 1$ and assume the statement holds for $s-1$. By Lemma~\ref{lem:lowDegreeVertexForUniqueHamPath}, $G$ contains a vertex $x$ of degree at most $\lambda \log_2(n+1)+4$. Let $G' := G-N[x]$ be the graph obtained by removing $x$ and all its neighbors from $G$. Note that $G'$ is $P_3+(s-1)P_1$-free, because if $G'$ would contain a vertex set $S' \subseteq V(G')$ that induces a $P_3+(s-1)P_1$, then $S' \cup \{x\}$ would induce a $P_3+sP_1$ in $G$. Let $A_1,\ldots,A_q$ be the subsets of $V(G')$ inducing the connected components of $\mathfrak{P}-N[x]$.

Note that $q \leq \deg_G(x)+2$. Now, for each $i \in [q]$, $G[A_i]$ is $P_3+(s-1)P_1$-free, because $G'$ is $P_3+(s-1)P_1$-free. Moreover, $G[A_i]$ contains a unique Hamiltonian path between the endpoints of the path $\mathfrak{P}[A_i]$, because otherwise $G$ would contain at least two Hamiltonian $v_1v_n$-paths by replacing the corresponding subpath in $\mathfrak{P}$. Therefore, we can apply the induction hypothesis and $|V(G[A_i])| \leq M_{s-1}$ for all $i \in [q]$. Combining this with the aforementioned bound on $q$ we have $$n \leq 1+\deg_G(x)+qM_{s-1} \leq 1+\deg_G(x)+(\deg_G(x)+2)M_{s-1} \leq (\deg_G(x)+2)(M_{s-1}+1).$$ Therefore
$$n \leq (\lambda \log_2(n+1)+6)(M_{s-1}+1).$$

This inequality has a function depending on $n$ on both sides, but we would like to bound $n$ only in terms of $M_{s-1}$. So the natural question is then: how large can we make $n$ before this inequality starts failing? This question motivates the next part of the proof.

Define $r := 7\lambda (M_{s-1}+1)$. We claim that $n<r \log_2(r)$. Consider the function $$h(t) := t-(\lambda \log_2(t+1)+6)(M_{s-1}+1).$$

By definition of $h(t)$, we have $h(n) \leq 0$. Our goal is to show that $h(t)>0$ for every $t \geq r \log_2(r)$ (and therefore, we must have $n<r \log_2(r)$). The derivative of $h(t)$ is given by:
$$h'(t)=1-\frac{\lambda(M_{s-1}+1)}{(t+1) \ln(2)}.$$

Since $M_{s-1}+1 \geq M_0+1=4$, we have $r = 7\lambda (M_{s-1}+1)>2$ and hence $r \log_2(r) > r$. In turn, we obtain for every $t \geq r \log_2(r)$ that $h'(t) \geq 1-\frac{\lambda(M_{s-1}+1)}{r \ln(2)}=1-\frac{1}{7 \ln(2)}>0$. Therefore, $h(t)$ is increasing on the interval $[r \log_2(r),\infty)$.

Finally, we will show that $h(r\log_2(r)) >0$. Since $r \log_2(r) \geq 1$, we have $r \log_2(r)+1 \leq 2r\log_2(r)$ and thus $\log_2(r \log_2(r)+1) \leq 1+\log_2(r)+\log_2(\log_2(r))$. Therefore, we have
$$
\begin{aligned}
h(r \log_2(r)) &= r\log_2(r)-\bigl(\lambda \log_2(r \log_2(r)+1)+6\bigr)(M_{s-1}+1) \\
&\geq r\log_2(r)-\bigl(\lambda(1+\log_2(r)+\log_2(\log_2(r)))+6\bigr)(M_{s-1}+1)\\
&= (7\lambda(M_{s-1}+1))\log_2(r)-\bigl(\lambda(1+\log_2(r)+\log_2(\log_2(r)))+6\bigr)(M_{s-1}+1)\\
&= (M_{s-1}+1)(6\lambda\log_2(r)-\lambda \log_2(\log_2(r))-\lambda-6).
\end{aligned}
$$
We have $r>2$, so $\log_2(\log_2(r)) \leq \log_2(r)$ and therefore
$$
h(r \log_2(r)) \geq (M_{s-1}+1)(5\lambda\log_2(r)-\lambda-6).
$$
Moreover, we have $\log_2(r)\geq1$ and thus 
$$
h(r \log_2(r)) \geq (M_{s-1}+1)(4\lambda-6).
$$
Since $4\lambda-6>0$, we conclude that indeed
$$
h(r \log_2(r)) >0.
$$
Therefore
$$
n<r\log_2(r)=7\lambda (M_{s-1}+1)\log_2(7\lambda (M_{s-1}+1)) 
$$
and thus
$$
n \leq \left\lfloor 7\lambda(M_{s-1}+1)\log_2(7\lambda(M_{s-1}+1))\right\rfloor=M_s.
$$
\end{proof}

By combining the previous theorem with Lemma~\ref{lem:pathToCycles}, we get the following corollary.

\begin{corollary}
\label{cor:countingsP4PlussP1}
Let $s \geq 0$ be an integer and let $G$ be a $P_3+sP_1$-free graph on $n$ vertices having two vertices $a,b \in V(G)$ between which there exists a Hamiltonian path. Then $G$ contains at least $2^{\left\lfloor \frac{n}{M_s+1}\right\rfloor}$ distinct Hamiltonian $ab$-paths.
\end{corollary}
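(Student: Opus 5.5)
The plan is to apply Lemma~\ref{lem:pathToCycles} with $\mathcal{G}$ the class of $P_3+sP_1$-free graphs, $n' := M_s+1$ and $c := 2$. This class is hereditary, since being $F$-free is preserved under taking induced subgraphs. The conclusion of Lemma~\ref{lem:pathToCycles} then reads: $G$ has at least $2^{\lfloor n/(M_s+1)\rfloor}$ distinct Hamiltonian $ab$-paths, which is exactly the claim.

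So the work reduces to checking the hypothesis of Lemma~\ref{lem:pathToCycles}. Let $G'$ be a $P_3+sP_1$-free graph on $M_s+1$ vertices with a Hamiltonian path between two distinct vertices $a',b'$. We need at least two Hamiltonian $a'b'$-paths in $G'$. Suppose instead there were exactly one. Then $G'$ satisfies all hypotheses of the preceding theorem, with that path playing the role of $\mathfrak{P}$. The theorem gives $|V(G')| \le M_s$, contradicting $|V(G')| = M_s+1$. Hence $G'$ has at least $2$ such paths.

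Two small points need checking before invoking the lemma.

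\emph{Distinct endpoints.} The lemma is applied to the subpaths of $\mathfrak{P}$ consisting of $n'$ consecutive vertices. Since $n' = M_s+1 \ge 4$, the endpoints of each such subpath are distinct, so the hypothesis quantifying over distinct $a',b'$ applies.

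\emph{Small $n$.} If $n < M_s+1$, the bound is $2^0 = 1$, which is trivial because $G$ has at least one Hamiltonian $ab$-path. The lemma already handles this case.

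There is no real obstacle here. The only care needed is that the preceding theorem is stated with the path endpoints named $v_1,v_n$. Any graph with a unique Hamiltonian path between two vertices fits that form after relabelling the vertices along the path.
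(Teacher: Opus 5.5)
Your proposal is correct and is exactly the paper's argument: the corollary comes from applying Lemma~\ref{lem:pathToCycles} to the hereditary class of $P_3+sP_1$-free graphs with $n' = M_s+1$ and $c=2$. The hypothesis of the lemma holds because the preceding theorem shows that a unique Hamiltonian path between two vertices forces at most $M_s$ vertices. Your side remarks are sound: the endpoints are distinct since $M_s \ge 3$, and the case $n < M_s+1$ is trivial.
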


Finally, we draw the reader's attention to the asymptotic behavior of $M_s$.

\begin{proposition}
\label{prop:growthOfMs}
    Let $c>1$ be a constant and define a sequence $(M_s)_{s\ge 0}$ by
$$
M_0=3,
\qquad
M_s=\left\lfloor c(M_{s-1}+1)\log_2\!\bigl(c(M_{s-1}+1)\bigr)\right\rfloor
\quad\text{for }s\ge 1.
$$
Define $a_s:=\log_2(M_s+1)$. Then, $a_s=\Theta(s\log s)$ and thus $M_s=2^{\Theta(s\log s)}$.

\end{proposition}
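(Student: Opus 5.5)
We track $a_s=\log_2(M_s+1)$ and aim to show $a_s = a_{s-1} + \Theta(\log a_{s-1})$, with $a_s\to\infty$. From this we derive $a_s=\Theta(s\log s)$.

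\emph{One-step recurrence.} Put $x:=M_{s-1}+1\ge 4$. Then $M_s+1$ lies between $c x\log_2(cx)$ and $c x\log_2(cx)+1$. Taking logarithms,
$$a_s=\log_2 x+\log_2 c+\log_2\log_2(cx)+O\!\left(\frac{1}{x\log_2(cx)}\right).$$
Since $\log_2 x=a_{s-1}$, this gives
$$a_s=a_{s-1}+\log_2(a_{s-1}+\log_2 c)+\log_2 c+o(1).$$
Because $c>1$ and $\log_2(cx)\ge 2$, the increment is at least $\log_2 c + 1 >1$ for every $s$. Hence $a_s\ge a_0+s$, so $a_s\to\infty$. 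Also $\log_2(a_{s-1}+\log_2 c)=\log_2 a_{s-1}+o(1)$. Altogether there are constants $0<\alpha\le\beta$ such that, for all $s\ge1$ and $a_{s-1}\ge 2$,
$$a_{s-1}+\alpha\log_2 a_{s-1}\le a_s\le a_{s-1}+\beta\log_2 a_{s-1}.$$
The lower constant $\alpha$ may be replaced by a constant additive increment plus $\log_2 a_{s-1}$; either form suffices.

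\emph{Upper bound.} We prove by induction that $a_s\le K s\log_2 s$ for $s\ge 2$, with $K$ large. We have $a_{s-1}\le K s\log_2 s$, so $\log_2 a_{s-1}\le \log_2 K+2\log_2 s$ for large $s$. Summing the increments gives
$$a_s\le a_0+\sum_{j<s}\beta\log_2 a_j\le a_0+\beta s(\log_2 K+2\log_2 s).$$
For $K\ge 4\beta$ and $s$ large this is at most $Ks\log_2 s$, and the small cases are absorbed into $K$. Alternatively, first get the crude bound $a_s\le Cs^2$ by induction, which gives $\log_2 a_j=O(\log s)$. Summing then yields $a_s=O(s\log s)$ directly, with no circular choice of $K$.

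\emph{Lower bound.} From $a_j\ge j$ we get $\log_2 a_j\ge\log_2 j$. Hence
$$a_s\ge \sum_{j=1}^{s-1}\alpha\log_2 j=\alpha\log_2((s-1)!)=\Omega(s\log s).$$
Therefore $a_s=\Theta(s\log s)$. Exponentiating gives $M_s+1=2^{\Theta(s\log s)}$, hence $M_s=2^{\Theta(s\log s)}$, since $M_s\ge 3$.

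The only delicate step is the first. We must check that the floor and the additive constants contribute only $O(1)$ per step, and that they never swamp the $\log a_{s-1}$ term. This is fine because $\log_2 a_{s-1}\ge\log_2 2=1$ and $c$ is fixed. For $c=7\lambda$ this recovers the rate of $M_s$ from the previous theorem. Note that it also shows the constant in Theorem~\ref{th:summary} is $2^{\Theta(s\log s)}$ rather than literally $2^{\Theta(s)}$, which is consistent up to that notation.
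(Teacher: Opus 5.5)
Your proof is correct and is essentially the paper's argument: the same two-sided recurrence $a_{s-1}+\log_2 a_{s-1}\le a_s\le a_{s-1}+\log_2 a_{s-1}+O(1)$, linear growth of $a_s$ followed by summing $\log_2 j$ for the lower bound, and an induction with a sufficiently large constant for the upper bound (the paper inducts one step at a time on $a_s\le K_2(s+1)\log_2(s+2)$ instead of summing increments). Your closing aside is mistaken, however: the constant $c_s=2^{\Theta(s)}$ in Theorem~\ref{th:summary} comes from $R(5s+7,5s+10)$ in the later section, not from $M_s$, and $2^{\Theta(s\log s)}$ is genuinely larger than $2^{\Theta(s)}$, which is why the paper improves the exponent.
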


\begin{proof}

Since $M_0=3$ and $(M_s)$ is increasing, we have $M_s+1 \geq 4$ for all $s \geq 0$. Hence, $X_s := c(M_{s-1}+1) \log_2(c(M_{s-1}+1)) \geq 4c \log_2(4c)>2$ for all $s \geq 1$. By definition we have $M_s = \lfloor X_s \rfloor$ and therefore $X_s \leq M_s+1 \leq X_s+2 \leq 2X_s$ and $\log_2(X_s) \leq a_s \leq \log_2(X_s)+1$. Since $\log_2(X_s)=\log_2(c)+\log_2(M_{s-1}+1)+\log_2(\log_2(c(M_{s-1}+1)))$, we have $a_s=a_{s-1}+\log_2(a_{s-1})+O(1)$. So there exists a constant $K>0$ such that for all $s\ge 1$,
\begin{equation}
\label{eq:rec-as}
a_{s-1}+\log_2(a_{s-1})
\leq 
a_s
\leq
a_{s-1}+\log_2(a_{s-1})+K.
\end{equation}

We first show that $a_s=\Omega(s\log(s))$.
Since $a_0=\log_2(4)>1$, we have $\log_2(a_{s-1}) \geq \log_2(\log_2(4))>0$ for all $s\ge 1$.
Hence, \eqref{eq:rec-as} gives $a_s \geq a_{s-1}+\log_2(\log_2(4))$, and so $a_s=\Omega(s)$. Therefore, there exists a constant $K_1>0$ such that
$a_{s-1}\geq \frac{s}{K_1}$ for all $s\geq 1$. Hence, we have $\log_2(a_{s-1}) \geq \log_2(s)-\log_2(K_1)$. Using the first inequality in \eqref{eq:rec-as}, we obtain $a_s\geq a_{s-1}+\log_2(s)-\log_2(K_1)$. Summing this inequality for $1,2,\dots,s$ yields $a_s\geq a_0+\sum_{i=1}^{s}(\log_2(i))- \log_2(K_1)s$. By Stirling's formula, $\sum_{i=1}^{s} \log_2(i) = \log_2(s!) = s\log_2(s)+O(s)$ and therefore $a_s=\Omega(s\log(s))$.

We now prove the upper bound $a_s=O(s\log s)$.
Choose a constant $K_2>0$ large enough such that
\begin{equation}
\label{eq:A-big}
K_2\log_2(s+1) \geq \log_2(K_2(s+1)\log_2(s+2))+K
\qquad\text{for all }s \geq 1.
\end{equation}
We claim that $a_s\leq K_2(s+1)\log_2(s+2)$ for all $s \geq 0$. We show this by induction on $s$. If $K_2$ is large enough, this inequality clearly holds for $s=0$. Assume now that $s \geq 1$ and that $a_{s-1}\le K_2 s\log_2(s+1)$. By \eqref{eq:rec-as}, we have $a_s\leq K_2 s\log_2(s+1)+\log_2(K_2 s\log_2(s+1))+K$.

By \eqref{eq:A-big}, we have $a_s\leq K_2 s\log_2(s+1)+K_2\log_2(s+1)$. Since $K_2(s+1)\log_2(s+2)-K_2 s\log_2(s+1) \geq K_2\log_2(s+1)$, we have $a_s \leq K_2(s+1) \log_2(s+2)$ and therefore $a_s=O(s \log(s))$.
\end{proof}

The arguments in the current section yield a comparatively direct inductive proof of exponential growth in the number of Hamiltonian paths. In the next section, we develop a substantially more technical approach, partially based on a powerful technique due to Thomassen. It leads to an improved exponent and, as a byproduct, yields additional structural properties of Hamiltonian paths with algorithmic consequences (see Section~\ref{sec:hamCycles}).

\section{Improving the exponent}

    We first define the concept of a $2$-switch. Let $\mathfrak{P}=v_1v_2\cdots v_n$ be a Hamiltonian path of a graph $G$. We say that a Hamiltonian path $\mathfrak{P}'$ is obtained from $\mathfrak{P}$ by a \textit{$2$-switch} if there exist integers $i$ and $j$ such that $1\leq i<j-1\leq n-2$ and $v_iv_j,\; v_{i+1}v_{j+1}\in E(G)$ and $\mathfrak{P}'$ is obtained from $\mathfrak{P}$ by replacing the edges $v_iv_{i+1}$ and $v_jv_{j+1}$ with the edges $v_iv_j$ and $v_{i+1}v_{j+1}$. Note that $\mathfrak{P}'$ is also a Hamiltonian $v_1v_n$-path.
    
    We now show that large $sP_1$-free graphs with a Hamiltonian path between two vertices have another Hamiltonian path between these vertices obtained by a 2-switch. In the following theorem, the notation $R(s,t)$ denotes the Ramsey number indicating the smallest integer $n$ such that every graph on at least $n$ vertices contains an independent set with $s$ vertices or a clique with $t$ vertices.

    \begin{theorem}
    \label{th:sP1}
        Let $s \geq 1$ be an integer and let $G$ be an $sP_1$-free graph on $R(s,s+3)$ vertices having a Hamiltonian path $\mathfrak{P}$ between two vertices $a, b \in V(G)$. Then $G$ has a second Hamiltonian $ab$-path $\mathfrak{P}'$, obtained from $\mathfrak{P}$ by a $2$-switch.
    \end{theorem}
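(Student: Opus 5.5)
The plan is to use Ramsey's theorem to find a large clique, and then use the independence bound a second time on the path-successors of the clique vertices.

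First, since $G$ has $R(s,s+3)$ vertices and contains no independent set of size $s$ (it is $sP_1$-free), $G$ contains a clique $K$ with $s+3$ vertices. Write $\mathfrak{P}=v_1v_2\cdots v_n$ with $v_1=a$, $v_n=b$. The goal is to find indices $i<j-1\le n-2$ with $v_iv_j, v_{i+1}v_{j+1}\in E(G)$. For this I will use clique vertices as the pair $v_i,v_j$, so that $v_iv_j\in E(G)$ is automatic. The remaining task is to get $v_{i+1}v_{j+1}\in E(G)$ while keeping $j\ge i+2$.

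Second, I will show that $K$ has very few path-consecutive pairs, or else a $2$-switch appears immediately. Call $v_p$ a \emph{bad} clique vertex if $v_p,v_{p+1}\in K$.
\begin{itemize}
\item Suppose there are two bad indices $p<q$ with $q\ge p+2$. Then $v_pv_q$ and $v_{p+1}v_{q+1}$ are both clique edges. Hence $i=p$, $j=q$ is a valid $2$-switch, since $q+1\le n$ gives $j\le n-1$.
\item Otherwise all bad indices lie within a window of two consecutive integers. So at most two clique vertices are bad.
\end{itemize}
This step is the one I expect to need the most care. I must check that a run of four consecutive clique vertices, or two disjoint runs of length at least two, always yields such a pair $p,q$. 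I must also confirm that the index constraints $1\le i<j-1\le n-2$ hold in each case.

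Third, discard from $K$ the at most two bad vertices and also $v_n$ if it lies in $K$. This leaves a set of at least $s$ clique vertices $v_{k_1},\dots,v_{k_s}$ with $k_1<\dots<k_s<n$. Each of them has a successor $v_{k_t+1}$ on $\mathfrak{P}$, and that successor is not in $K$. These successors are pairwise distinct. Since $\alpha(G)\le s-1$, two of them are adjacent, say $v_{k_a+1}v_{k_b+1}\in E(G)$ with $k_a<k_b$.

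Finally, I check that $k_b\ge k_a+2$. Indeed, $k_b=k_a+1$ would make the successor $v_{k_a+1}=v_{k_b}$ a clique vertex, which we excluded. Also $v_{k_a}v_{k_b}\in E(G)$ because both lie in $K$, and $k_b+1\le n$. So $i=k_a$, $j=k_b$ defines a $2$-switch, and it produces the required second Hamiltonian $ab$-path $\mathfrak{P}'$.
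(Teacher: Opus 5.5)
Your proof is correct and is essentially the paper's argument. You take a Ramsey clique $K$ of size $s+3$, observe that two path edges inside $K$ at index distance at least two already give a $2$-switch (so $\mathfrak{P}$ meets $K$ in at least $s+1$ separate runs), and then find two adjacent vertices among $s$ path-neighbours of these runs. The only difference is cosmetic: you use the successors of the last vertices of the runs, whereas the paper uses the predecessors $A_i[-1]$ of the first vertices of the blocks $B_{i+1}$. Your count of at most two ``bad'' vertices is a slightly cleaner way to obtain the paper's decomposition $B_1A_1B_2\cdots A_sB_{s+1}$, and it already settles the cases you flagged as needing extra care.
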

    \begin{proof}
    Let $n := R(s,s+3)$. Since $G$ has $n$ vertices and $G$ is $sP_1$-free, $G$ must contain a clique $K$ on $s+3$ vertices as a subgraph. Let 
    $v_1 := a$, $v_n := b$ and let $\mathfrak{P} := v_1 v_2 \ldots v_n$ be a Hamiltonian $ab$-path. Suppose for the sake of obtaining a contradiction that $G$ does not have a second Hamiltonian $ab$-path $\mathfrak{P}'$, obtained from $\mathfrak{P}$ by a 2-switch. We will now consider how $\mathfrak{P}$ traverses $K$. 
    
    First note that there do not exist integers $i$ and $j$ such that $1 \leq i \leq j-2 \leq n-3$ and $v_iv_{i+1} \in E(K)$ and $v_jv_{j+1} \in E(K)$, because then $\mathfrak{P}' := v_1 v_2 \ldots v_i v_jv_{j-1} \ldots v_{i+1} v_{j+1} v_{j+2} \ldots v_n$ is a second Hamiltonian $ab$-path, obtained from $\mathfrak{P}$ by a 2-switch. In particular, this implies that at most three consecutive vertices in $\mathfrak{P}$ all belong to $K$. Because of the two preceding sentences and since $K$ contains $s+3$ vertices, there must be a subpath of $\mathfrak{P}$ of the form $B_1A_1B_2 \ldots A_{s}B_{s+1}$, where each $A_i$ $(i \in [s])$ represents a non-empty subpath of $\mathfrak{P}$ containing exclusively vertices from $V(G) \setminus V(K)$ and each $B_i$ $(i \in [s+1])$ represents a non-empty subpath of $\mathfrak{P}$ containing exclusively vertices from $V(K)$. 
    
    For each $i \in [s]$ we let $A_i[-1]$ denote the last vertex on the subpath $A_i$; we let $A_i^{-}$ denote the (potentially empty) subpath of $A_i$ that contains all vertices of $A_i$ except for $A_i[-1]$; and for a path $P := p_1 p_2 \ldots p_k$, we let $\overleftarrow{P}$ denote the reversed path $p_k p_{k-1} \ldots p_1$. 

We now claim that $\{A_1[-1],A_2[-1],\ldots,A_s[-1]\}$ is an independent set, which contradicts the fact that $G$ is $sP_1$-free. Suppose for the sake of obtaining a contradiction that there are two integers $1 \leq i <j \leq s$ such that there is an edge between $A_i[-1]$ and $A_j[-1]$. Then the path obtained by replacing the subpath 
$$B_1A_1B_2 \ldots A_sB_{s+1}$$ with 
$$B_1A_1 \ldots A_i^{-} A_i[-1]A_{j}[-1]\overleftarrow{A_j^{-}}\overleftarrow{B_{j}}\overleftarrow{A_{j-1}} \ldots \overleftarrow{B_{i+1}} B_{j+1} A_{j+1} \ldots B_{s+1}$$ yields a second Hamiltonian $ab$-path $\mathfrak{P}'$ in $G$, obtained from $\mathfrak{P}$ by a 2-switch. Contradiction.

    \end{proof}

By combining Lemma~\ref{lem:pathToCycles} and Theorem~\ref{th:sP1}, we obtain the following corollaries.

\begin{corollary}
\label{cor:countingsP1}
    Let $s \geq 1$ be an integer and let $G$ be an $sP_1$-free graph on $n$ vertices having two vertices $a,b \in V(G)$ between which there exists a Hamiltonian path. Then $G$ contains at least $2^{\left\lfloor \frac{n}{R(s,s+3)}\right\rfloor}$ distinct Hamiltonian $ab$-paths.
\end{corollary}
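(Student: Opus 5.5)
Corollary~\ref{cor:countingsP1} follows by applying Lemma~\ref{lem:pathToCycles} to the hereditary class $\mathcal{G}$ of $sP_1$-free graphs, with $n' := R(s,s+3)$ and $c := 2$. Since $F$-freeness is preserved under taking induced subgraphs, $\mathcal{G}$ is hereditary, as already noted at the start of Section~\ref{sec:expCrudeBound}. The remaining hypothesis of Lemma~\ref{lem:pathToCycles} is the following: every $G' \in \mathcal{G}$ on exactly $n'$ vertices that has a Hamiltonian path between two distinct vertices $a',b'$ has at least two Hamiltonian $a'b'$-paths.

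This hypothesis is exactly Theorem~\ref{th:sP1}. Given such a $G'$ with Hamiltonian $a'b'$-path $\mathfrak{P}$, the theorem yields a Hamiltonian $a'b'$-path $\mathfrak{P}'$ obtained from $\mathfrak{P}$ by a $2$-switch. The path $\mathfrak{P}'$ has the same endpoints, and it differs from $\mathfrak{P}$ because a $2$-switch with $i<j-1$ removes the edge $v_iv_{i+1}$, which is not in $\mathfrak{P}'$. The only point to check is that the replaced edges $v_iv_j$ and $v_{i+1}v_{j+1}$ are genuinely new, which holds since $j \ne i+1$. Hence $G'$ has at least $c=2$ Hamiltonian $a'b'$-paths.

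Lemma~\ref{lem:pathToCycles} then gives at least $2^{\lfloor n/R(s,s+3)\rfloor}$ Hamiltonian $ab$-paths in $G$. The case $n<R(s,s+3)$ is trivial, since the bound is $2^0=1$ and one path is given. The only subtle step is the degenerate case $s=1$. There $G$ is $P_1$-free and hence empty, so no Hamiltonian path exists and the statement holds vacuously; in any case Lemma~\ref{lem:pathToCycles} handles it uniformly. I expect no real obstacle: the corollary is a direct combination of the two cited results.
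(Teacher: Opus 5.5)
Your proposal is correct and is exactly the paper's argument: the corollary is obtained by feeding Theorem~\ref{th:sP1} into Lemma~\ref{lem:pathToCycles} for the hereditary class of $sP_1$-free graphs, with $n' = R(s,s+3)$ and $c = 2$. Your extra checks (that the $2$-switch yields a genuinely distinct path, and the degenerate cases $n < R(s,s+3)$ and $s=1$) are valid, though not needed, since Theorem~\ref{th:sP1} already asserts a second path and the lemma already covers $n<n'$.
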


\begin{corollary}
\label{cor:stGraphs}
    Let $s \geq 2$ and $t \geq 1$ be integers and let $G$ be an $[s,t]$-graph on $n$ vertices having two vertices $a,b \in V(G)$ between which there exists a Hamiltonian path. Then $G$ contains at least $2^{\left\lfloor \frac{n}{R(s,s+3)}\right\rfloor}$ distinct Hamiltonian $ab$-paths.
\end{corollary}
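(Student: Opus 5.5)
The plan is to reduce Corollary~\ref{cor:stGraphs} to Corollary~\ref{cor:countingsP1} by showing that every $[s,t]$-graph with $t \geq 1$ is $sP_1$-free. By definition, every subgraph of $G$ induced by $s$ vertices contains at least $t \geq 1$ edges. Hence no set of $s$ vertices is independent, that is, $\alpha(G) \leq s-1$. This is exactly the statement that $G$ is $sP_1$-free. (If $n<s$, then $G$ is trivially $sP_1$-free, and in any case the claimed bound is then $2^0=1$, which holds because a Hamiltonian $ab$-path exists by assumption.)

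With this observation, apply Corollary~\ref{cor:countingsP1} with the same integer $s \geq 2 \geq 1$ to $G$ and the vertices $a,b$. It gives at least $2^{\left\lfloor \frac{n}{R(s,s+3)}\right\rfloor}$ distinct Hamiltonian $ab$-paths, which is the claim.

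Alternatively, one can unfold Corollary~\ref{cor:countingsP1} and argue directly. The class of $[s,t]$-graphs is hereditary: every $s$-subset of an induced subgraph is an $s$-subset of $G$, and it induces the same edges. Theorem~\ref{th:sP1} then gives $c=2$ Hamiltonian $ab$-paths in every member of the class on $n'=R(s,s+3)$ vertices that has one. Lemma~\ref{lem:pathToCycles} finishes the argument.

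There is no real obstacle. The only point to check is the implication ``$[s,t]$-graph with $t\geq 1$ $\Rightarrow$ $sP_1$-free'', which uses only $t\ge 1$. In particular, the bound does not improve with larger $t$ in this argument.
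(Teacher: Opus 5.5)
Your proposal is correct and follows the paper's own route. The paper also uses the observation that an $[s,t]$-graph with $t\ge 1$ has $\alpha(G)\le s-1$, hence is $sP_1$-free, and then obtains the bound by combining Theorem~\ref{th:sP1} with Lemma~\ref{lem:pathToCycles}, which amounts to applying Corollary~\ref{cor:countingsP1}.
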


We now introduce a lemma due to Thomassen (see~\cite{BJ98,T97}). Let $G$ be a graph for which all edges are colored green and red. We say that $u, v \in V(G)$ are \textit{red neighbors} if $uv$ is a red edge and \textit{green neighbors} if $uv$ is a green edge. We call a set $X \subseteq V(G)$ \textit{red-independent} if no two vertices $u, v \in X$ are red neighbors and \textit{weakly green-dominating} if every red neighbor of each vertex in $X$ also has a green neighbor in $X$.

\begin{lemma}[Thomassen~\cite{T97}]
\label{lem:CycleRedGreen}
    Let $G$ be a graph with a Hamiltonian cycle $\mathfrak{C}$. Color the edges of $E(\mathfrak{C})$ red and color the edges of $E(G)\setminus E(\mathfrak{C})$ green. If there exists a non-empty set $X \subseteq V(G)$ that is both red-independent and weakly green-dominating, then $G$ contains a second Hamiltonian cycle $\mathfrak{C}'$ distinct from $\mathfrak{C}$.
\end{lemma}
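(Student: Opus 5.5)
The plan is to reproduce Thomassen's argument: use the set $X$ to build a longest path with a fixed starting edge, and then apply a parity count. Write $\mathfrak{C}$ for the given Hamiltonian cycle. Assume, for contradiction, that $\mathfrak{C}$ is the only Hamiltonian cycle of $G$. Since every vertex has exactly two red neighbors, for each $x \in X$ let $x^-$ and $x^+$ denote its two neighbors on $\mathfrak{C}$. Red-independence tells us that $x^\pm \notin X$. Weak green-domination tells us that each $x^\pm$ has a green neighbor in $X$.

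We work with the auxiliary graph $H := G - E'$, where $E'$ is a carefully chosen set of green edges. Concretely, for each vertex $y$ not in $X$, keep only the green edges from $y$ into $X$, and delete all green edges between two vertices outside $X$. By red-independence, no green edge joins two vertices of $X$ in a problematic way: we may keep or delete such edges freely, and we choose to delete them.

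After this reduction, every vertex $x \in X$ has degree $\deg_H(x)$, which is the number of its green neighbors (all outside $X$) plus $2$ red edges. The idea is to arrange the degrees so that we can apply the standard \emph{lollipop} parity lemma (Thomason's lemma): in a graph where every vertex other than the two endpoints has odd degree, the number of Hamiltonian paths starting with a fixed edge $uv$ is even. Equivalently, one can use Thomason's exchange graph on Hamiltonian paths that start at $x$ along the edge $xx^+$.

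The key steps are as follows.
\begin{enumerate}
\item Fix $x \in X$. Remove the red edge $xx^-$ and consider the Hamiltonian path $\mathfrak{C} - xx^-$, which starts at $x$ with the edge $xx^+$.
\item Perform lollipop moves. At the current path endpoint $y$, the path can be rotated along any neighbor of $y$. A Hamiltonian path whose endpoint $y$ is adjacent to $x$ via an edge of $H$ other than $xx^+$ closes into a Hamiltonian cycle of $H \subseteq G$ that differs from $\mathfrak{C}$, unless the closing edge is $xx^-$.
\item Show that the degree-parity structure makes the exchange graph have odd-degree vertices only at the initial path and at paths closing into new Hamiltonian cycles. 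To achieve this, adjust $H$ so that every vertex outside $X$ has even degree in $H$, by keeping exactly one green edge into $X$ from each such vertex whenever $y \in \{x'^{\pm}\}$; weak domination guarantees that such an edge exists.
\item Conclude by the handshake lemma that a second endpoint of the exchange walk exists, which yields a Hamiltonian cycle $\mathfrak{C}' \neq \mathfrak{C}$.
\end{enumerate}

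The main obstacle is step 3: choosing the deleted green edges so that the parity of each vertex's degree in $H$ comes out correctly. Vertices in $X$ need their degree to force a closure, while vertices outside $X$ must behave as degree-even (or odd) interior vertices. The first approach I would try is Thomassen's actual choice. Keep all green edges incident with $X$ and delete all other green edges. Then each vertex $v \notin X$ has red degree $2$ plus its green edges into $X$. Red-independence together with weak domination ensures that the endpoint-rotation process can only terminate by producing a new Hamiltonian cycle. The verification is a local check on which rotations are available at vertices of $X$ and at their red neighbors.
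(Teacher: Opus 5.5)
The paper does not prove this lemma; it quotes it from Thomassen. Your argument therefore has to stand on its own, and it fails at the step that carries the weight.

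\textbf{Parity.} Your framework is the right one: Thomason's lollipop graph on Hamiltonian paths of a spanning subgraph $H \supseteq E(\mathfrak{C})$ that start with the fixed edge $xx^+$. In that graph a path ending at $z$ has degree $\deg_H(z)-1-[zx\in E(H)]$, with $[\cdot]$ the indicator. The handshake argument needs every vertex that can actually occur as an endpoint to have \emph{odd} degree in $H$. Then the odd-degree paths are exactly those that close into Hamiltonian cycles through $xx^+$, and since $\mathfrak{C}$ is one of them, there must be another.

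Your Step~3 names the right subgraph (one green edge into $X$ from each red neighbor of $X$). But it asks for \emph{even} degree outside $X$, which contradicts the degree $3$ that this subgraph produces. You then settle on keeping all green edges incident with $X$. With that choice a red neighbor $y$ of $X$ has degree $2$ plus its number of green neighbors in $X$, which can be even. A path ending at such a $y$ not adjacent to $x$ is then an odd-degree vertex of the lollipop graph that is not a Hamiltonian cycle, so the count yields nothing.

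The parity lemma as you quote it is also false. In the path $uvw$, every vertex other than $u,v$ has odd degree, yet exactly one Hamiltonian path starts with $uv$. The correct conclusion concerns Hamiltonian cycles through $uv$.

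\textbf{Endpoints in $X$.} You never explain why certain vertices never become endpoints. These are the vertices of $X$, whose degrees $2+g(x)$ have uncontrolled parity, and the inner vertices of the red segments between consecutive vertices of $X$, which have degree $2$. This is not a local check on available rotations. If $pq$ is a two-vertex red segment and a path ends at $p$ with predecessor different from $q$, then rotating along $pq$ moves the endpoint into $X$.

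What rules this out is a global edge count. Take $H$ to be $\mathfrak{C}$ plus exactly one green edge from each red neighbor of $X$ into $X$; such an edge exists by weak green-domination. Then:
\begin{itemize}
\item $X$ is independent in $H$, and $H$ has exactly $|V\setminus X|-|X|$ edges inside $V\setminus X$ (the red ones, by red-independence).
\item A Hamiltonian path from $x$ to $z$ in $H$ uses $2|X|-1-[z\in X]$ edges incident with $X$, hence $|V\setminus X|-|X|+[z\in X]$ edges inside $V\setminus X$.
\item So $z\notin X$, and the path contains every red edge inside $V\setminus X$.
\item Hence $z$ is not an inner segment vertex, so $z$ is a red neighbor of $X$ with $\deg_H(z)=3$.
\end{itemize}
With this, the lollipop parity argument goes through. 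This counting step is the heart of the proof, and your proposal neither states it nor supplies a substitute.
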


We now prove a \lq Hamiltonian path version\rq~of this lemma.

\begin{lemma}
\label{lem:PathRedGreen}
    Let $G$ be a graph with a Hamiltonian path $\mathfrak{P}$ between two vertices $a$ and $b$. Color the edges of $E(\mathfrak{P})$ red and color the edges of $E(G)\setminus E(\mathfrak{P})$ green. If there exists a non-empty set $X \subseteq V(G) \setminus \{a,b\}$ that is both red-independent and weakly green-dominating, then $G$ contains a second Hamiltonian $ab$-path $\mathfrak{P}'$ distinct from $\mathfrak{P}$.
\end{lemma}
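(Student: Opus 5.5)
The plan is to reduce Lemma~\ref{lem:PathRedGreen} to Thomassen's Lemma~\ref{lem:CycleRedGreen} with the same gadget as in the proof of Lemma~\ref{lem:lowDegreeVertexForUniqueHamPath}.

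First, let $G'$ be obtained from $G$ by adding a new vertex $c$ and the edges $ac$ and $bc$. Then $\mathfrak{C} := \mathfrak{P} + ac + bc$ is a Hamiltonian cycle of $G'$. Since $c$ has degree $2$, the Hamiltonian cycles of $G'$ correspond bijectively to the Hamiltonian $ab$-paths of $G$: delete $c$ from the cycle, or close the path through $c$. So it suffices to find a second Hamiltonian cycle in $G'$.

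Next, colour the edges of $G'$ as Lemma~\ref{lem:CycleRedGreen} requires. The red edges are $E(\mathfrak{C}) = E(\mathfrak{P}) \cup \{ac, bc\}$, and the green edges are $E(G') \setminus E(\mathfrak{C}) = E(G) \setminus E(\mathfrak{P})$. Thus every edge of $G$ keeps its colour, and the only new edges $ac$ and $bc$ are red.

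The remaining step, which is the only point needing care, is to check that the given set $X \subseteq V(G)\setminus\{a,b\}$ is still red-independent and weakly green-dominating in $G'$.
\begin{itemize}
\item \emph{Red-independence.} Red edges among vertices of $X$ are the same in $G$ and $G'$, because $c \notin X$ and the new red edges are incident only with $c$.
\item \emph{Weak green-domination.} We need the red neighbourhoods of vertices of $X$ to be unchanged. A vertex $x \in X$ gains a new red neighbour only if $x \in \{a,b,c\}$, and this is excluded because $X \subseteq V(G)\setminus\{a,b\}$. So every red neighbour $y$ of $x$ in $G'$ is a red neighbour in $G$. By hypothesis $y$ has a green neighbour in $X$ in $G$, and that edge is still green in $G'$.
\end{itemize}
This is exactly where the assumption $a,b\notin X$ is used. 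If $a \in X$, then $c$ would become a red neighbour of $a$ with no green neighbour at all, and the domination condition would fail.

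Finally, $X$ is non-empty, so Lemma~\ref{lem:CycleRedGreen} yields a Hamiltonian cycle $\mathfrak{C}' \neq \mathfrak{C}$ in $G'$. It passes through $ac$ and $bc$ because $\deg(c)=2$, so $\mathfrak{P}' := \mathfrak{C}' - c$ is a Hamiltonian $ab$-path of $G$. It differs from $\mathfrak{P}$ because $\mathfrak{C}'$ differs from $\mathfrak{C}$ and both contain $ac$ and $bc$.
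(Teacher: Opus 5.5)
Your proposal is correct and follows essentially the same route as the paper: add a vertex $c$ adjacent only to $a$ and $b$, check that $X$ remains red-independent and weakly green-dominating in $G'$ because $X \cap \{a,b,c\}=\emptyset$, apply Lemma~\ref{lem:CycleRedGreen}, and delete $c$ from the second Hamiltonian cycle. Your verification of the colouring conditions is in fact somewhat more explicit than the paper's one-line justification.
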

\begin{proof}
Let $G'$ be the graph obtained by adding a vertex $c$ to $G$ and adding the edges $ac$ and $bc$. Then the edge set $E(\mathfrak{P}) \cup \{ac,bc\}$ induces a Hamiltonian cycle $\mathfrak{C}$ in $G'$. Since $X \cap \{a,b,c\}=\emptyset$, the set $X$ is still red-independent and weakly green-dominating in $G'$. Therefore, by Lemma~\ref{lem:CycleRedGreen} $G'$ has a second Hamiltonian cycle $\mathfrak{C}'$ distinct from $\mathfrak{C}$. Since $c$ has degree 2, every Hamiltonian cycle in $G'$ must contain the edges $ac$ and $bc$. Therefore, the path $\mathfrak{P}'$ obtained by removing $c$ from $\mathfrak{C}'$ yields a second Hamiltonian $ab$-path $\mathfrak{P}'$ in $G$. \end{proof}

We are now ready to prove the following theorem that will almost directly lead to a generalization of Theorem~\ref{th:sP1}.

\begin{theorem}
\label{th:mainTheorem}
    Let $s \geq 0$ be an integer and let $G$ be a $P_3+sP_1$-free graph on $n$ vertices such that there is a unique Hamiltonian path between $v_1, v_n \in V(G)$, namely $\mathfrak{P} = v_1 v_2 \ldots v_n$. Then $\alpha(G) \leq 5s+6$.
\end{theorem}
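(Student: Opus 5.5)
We argue by contradiction: suppose $\alpha(G)\geq 5s+7$ and let $I$ be an independent set of size $5s+7$. The aim is to produce a non-empty set $X\subseteq V(G)\setminus\{v_1,v_n\}$ that is red-independent and weakly green-dominating with respect to the colouring of Lemma~\ref{lem:PathRedGreen}. That lemma then yields a second Hamiltonian $v_1v_n$-path, which is the desired contradiction. The natural candidate is $X := I\setminus\{v_1,v_n\}$, perhaps thinned further. Since $I$ is independent, $X$ has no edges at all, so it is automatically red-independent. The whole difficulty is weak green-domination: every path-neighbour $v_{i\pm1}$ of a vertex $v_i\in X$ must have a non-path neighbour in $X$.

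The $P_3+sP_1$-freeness will be used as follows. Take $v_i\in X$ and a path-neighbour $u=v_{i+1}$, and suppose $u$ has no green neighbour in $X$. Then the only neighbours of $u$ in $X$ are red ones, so $u$ is adjacent to at most two vertices of $X$, namely $v_i$ and possibly $v_{i+2}$. Choose two further vertices $w,w'$ with $v_i u$ an edge, and look at the induced $P_3$ given by $v_i,u,v_{i+2}$ when $v_{i+2}\in X$. Alternatively, use the edge $v_iu$ together with a vertex adjacent to exactly one of $v_i,u$. In either case we obtain an induced $P_3$ whose vertices are all non-adjacent to most vertices of $X$.

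More precisely, the vertices of $X$ far from this $P_3$ are non-adjacent to $v_i$ (because $X$ is independent) and non-adjacent to $u$ (by assumption). If we can also ensure they avoid the third vertex of the $P_3$, then $s$ of them together with the $P_3$ induce $P_3+sP_1$, a contradiction. The third vertex itself may have many neighbours in $X$. To handle this, choose the $P_3$ as $v_i u v_{i+2}$ when $v_{i+2}\in X$ (this is induced, since $v_iv_{i+2}$ is not an edge). Otherwise choose it as $u v_i v_{i-1}$ or with a suitable other neighbour, and bound the number of $X$-vertices adjacent to that third vertex.

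The counting $5s+6$ suggests the following procedure. Partition $X$ into a "bad" part, namely vertices having some path-neighbour without a green neighbour in $X$, and iteratively delete bad vertices. Each deletion can create new bad vertices only among those whose path-neighbours relied on the deleted vertex as their green neighbour. We must show that this process cannot exhaust $X$. The argument is that if at least $s+1$ bad vertices remain (up to constants absorbing $v_1,v_n$ and path-adjacency collisions), we find the forbidden $P_3+sP_1$. Each bad witness rules out at most about $5$ vertices of $X$ (itself, its path-neighbours' other $X$-neighbours, and the third $P_3$ vertex's neighbours), which gives the factor $5$ and the additive constant.

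The main obstacle is making this deletion process rigorous. Removing vertices from $X$ can destroy green domination of other vertices, so a single-shot argument does not work. To handle it, I would first try to select a suitable sub-collection directly: among the independent vertices, find $s$ whose closed path-neighbourhoods are pairwise far apart on $\mathfrak{P}$ and which are nonadjacent to a fixed bad $P_3$. I would use $P_3+sP_1$-freeness to show that at most $5s+6$ vertices of $I$ can fail to be green-dominated in this robust sense, so that the surviving set is non-empty and stable under the closure condition.
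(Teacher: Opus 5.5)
Your proposal has a genuine gap, and it is in the case you pass over with ``choose it as $uv_iv_{i-1}$ or with a suitable other neighbour'': a path-neighbour $u$ of $v_i\in I$ whose \emph{only} neighbour in $I$ is $v_i$. (The other case, with $u$ adjacent to both $v_i$ and $v_{i+2}\in I$, you handle correctly via the induced $P_3$ $v_iuv_{i+2}$; this is the paper's Claim~2.) Freeness cannot exclude such vertices, or even bound how many there are, so the statement you plan to prove in your last paragraph is false.

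Here is an example. Take a clique $Q=\{q_1,\dots,q_{k-1}\}$ and join it completely to a perfect matching $u_1x_1,\dots,u_kx_k$.
\begin{itemize}
\item Every induced $P_3$ has its middle vertex in $Q$, and that vertex is adjacent to all other vertices. So the graph is $P_3+P_1$-free, hence $P_3+sP_1$-free for every $s\ge 1$.
\item It has $\alpha=k$ and the Hamiltonian path $u_1x_1q_1u_2x_2q_2\cdots q_{k-1}u_kx_k$.
\item Any maximum independent set takes one end of each matching edge. The other end is a path-neighbour that sees no other vertex of $I$.
\item There is not even a vertex adjacent to exactly one of $v_i,u$, so no induced $P_3$ through the edge $v_iu$ exists.
\end{itemize}
Hence no non-empty subset of $I$ is weakly green-dominating, however you thin it. This graph has many Hamiltonian paths, and that is the point. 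From freeness and large $\alpha$ alone you cannot produce $X\subseteq I$. The uniqueness of $\mathfrak{P}$ must be used a second time, not only through Lemma~\ref{lem:PathRedGreen}.

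The paper supplies three ingredients you are missing.

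\textbf{Extremal choice of $I$.} Take $I$ maximum and, among such sets, maximizing the sum of positions on $\mathfrak{P}$. If the successor $x$ of $u_i$ saw only $u_i$ in $I$, then $(I\setminus\{u_i\})\cup\{x\}$ would contradict this choice. So only predecessors can be bad.

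\textbf{A dichotomy that removes your cascading-deletion worry.} Suppose a red neighbour $x$ of $u_i$ has even one green neighbour $u_t\in I$. Then $u_ixu_t$ is an induced $P_3$, so $x$ misses at most $s-1$ vertices of $I$ and has at least $k-s-1$ green neighbours in $I$. Now discard $v_1,v_n$ and the $|B|$ vertices of $I$ whose predecessor is bad. What remains is non-empty, red-independent and weakly green-dominating as soon as $k>|B|+s+3$. This gives $k\le |B|+s+3$ in one step, with no iteration.

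\textbf{A separate rerouting bound $|B|\le 4s+3$.}
\begin{itemize}
\item Extend each bad predecessor backwards to a maximal segment of vertices seeing only $u_i$ in $I$. Let $y_i$ be its first vertex and $q_i$ the vertex preceding it.
\item The $y_i$ form an independent set $Y$, and each $q_i$ misses at most $s$ vertices of $Y$.
\item Join $y_i,y_j$ in an auxiliary graph when $q_iy_j,q_jy_i\in E(G)$. This graph has more than $|Y|^2/4$ edges, so Mantel's theorem gives a triangle.
\item The triangle yields a second Hamiltonian path: replace $q_iy_iCq_jy_jDq_ky_k$ by $q_iy_jDq_ky_iCq_jy_k$.
\end{itemize}

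The constant $5s+6$ is $(s+3)+(4s+3)$. Your heuristic of about five excluded vertices per bad witness does not correspond to an actual argument.
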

\begin{proof}
If $s=0$, then $G$ is $P_3$-free. Since $G$ has a Hamiltonian path, $G$ is connected, and every connected $P_3$-free graph is complete. Hence $\alpha(G)=1$, and the claim follows. We may therefore assume that $s\ge 1$.

Color the edges of $E(\mathfrak{P})$ red and the edges of $E(G) \setminus E(\mathfrak{P})$ green. Let $k := \alpha(G)$ and let $I := \{u_1,u_2,\ldots,u_k\}$ be a maximum independent set in $G$ (such that the vertices $u_i$ are ordered in the order that they appear on $\mathfrak{P}$) that maximizes the sum of the indices  of these $k$ vertices. If $k \leq s+1$, there is nothing to prove, so assume $k \geq s+2$. We call a vertex $x \in V(G) \setminus V(I)$ \textit{bad} if $x$ has no green neighbor in $I$. We first prove the following claim.

\bigskip 

\noindent \textbf{Claim 1.} Let $x \in V(G) \setminus V(I)$ be a vertex that is not bad and also a red neighbor of some vertex $u_i \in I$. Then $x$ has at least $k-s-1$ green neighbors in $I$.

\bigskip

\noindent Since $x$ is not bad, it has a green neighbor $u_t \in I$. This means that $\{u_i,x,u_t\}$ induces a $P_3$, because $I$ is an independent set. Let $N \subseteq I$ be the set of vertices from $I$ that are not adjacent to $x$. If $|N| \geq s$, then $G[N \cup \{u_i,x,u_t\}]$ would contain an induced $P_3+sP_1$ and therefore $G$ would not be $P_3+sP_1$-free. As a result, we have $|N| \leq s-1$ and therefore $x$ is adjacent to at least $k-s+1$ vertices in $I$. Since $x$ is adjacent to at most two red vertices, $x$ has at least $k-s-1$ green neighbors in $I$. This proves the claim. 

\bigskip

The following claim provides more insight into bad vertices.

\bigskip 

\noindent \textbf{Claim 2.} Every bad vertex $y \in V(G) \setminus V(I)$ is adjacent to exactly one vertex in $I$.

\bigskip

\noindent

Since $I$ is a maximum independent set, every vertex in $V(G) \setminus V(I)$ is adjacent to at least one vertex in $I$. Suppose for the sake of obtaining a contradiction that $y$ would be adjacent to two vertices $u_i, u_j \in I$. Since $I$ is an independent set, $\{u_i,y,u_j\}$ induces a $P_3$. Since $y$ is bad, $u_i$ and $u_j$ are both red neighbors of $y$. But then $G[\{u_i,y,u_j\} \cup (I \setminus \{u_i,u_j\})]$ contains an induced $P_3+sP_1$ since $|I \setminus \{u_i,u_j\}| = k-2 \geq s$. Contradiction. This proves the claim. 

\bigskip

The following claim justifies why $I$ was chosen such that the sum of the positions on $\mathfrak{P}$ is maximized.

\bigskip 

\noindent \textbf{Claim 3.} Let $u_i \in I$ and suppose there exists a successor $x$ of $u_i$ on the path $\mathfrak{P}$. Then $x$ is not bad. 

\bigskip

\noindent

Suppose for the sake of obtaining a contradiction that $x$ is bad. By Claim 2, the only neighbor of $x$ in $I$ is $u_i$. But then $(I \cup \{x\}) \setminus \{u_i\}$ would be a maximum independent set for which the sum of the positions of the vertices is strictly larger than the sum of the positions of the vertices in $I$. This contradicts the choice of $I$ and proves the claim.

\bigskip

If $x \in V(G) \setminus V(I)$ is bad then there must be a successor $u$ of $x$ in the path $\mathfrak{P}$ by Claims~2 and 3. For each $i \in \{1,2,\ldots,k\}$, let $x_i$ be the predecessor of $u_i$ on the path $\mathfrak{P}$ if it exists. Define $B := \{i~|~i \in \{1,2,\ldots,k\}\text{ and }x_i\text{ exists and is bad}\}$. Define $X := \{u_i~|~ i \in (\{1,2,\ldots,k\} \setminus B)\text{ and }u_i \notin \{v_1,v_n\}\}$. The next claim states that $k$ must be bounded by a function depending only on $|B|$ and $s$.

\bigskip 

\noindent \textbf{Claim 4.} We have $k \leq |B|+s+3$.

\bigskip

\noindent

Suppose for the sake of obtaining a contradiction that $k>|B|+s+3$. Then $X$ is non-empty. Let $u_i \in X$ be a vertex in $X$. Since $u_i \notin \{v_1,v_n\}$, it has two red neighbors on $\mathfrak{P}$. Claim 3 and the choice of $X$ imply that neither of these two red neighbors are bad. By Claim 1, each of these two red neighbors has at least $k-s-1$ green neighbors in $I$. Since $k-s-1>|B|+2$ and $|X| \geq k-|B|-2$, each of these two red neighbors has a green neighbor in $X$. But then $X$ is a red-independent and weakly green-dominating set that contains neither $v_1$ nor $v_n$ and by Lemma~\ref{lem:PathRedGreen}, $G$ would not contain a unique Hamiltonian $v_1v_n$-path. This leads to a contradiction and proves the claim.

\bigskip

Next, we will work towards bounding $|B|$ in terms of $s$. Let $i \in B$. By Claim~2, the vertex $x_i$ has $u_i$ as its only neighbor in $I$, so the one-vertex path consisting of $x_i$ satisfies the following property: every vertex of the path has $u_i$ as its only neighbor in $I$. We may therefore extend this path backwards along $\mathfrak{P}$ as long as this property remains true. Let $T_i$ be the resulting maximal subpath of $\mathfrak{P}$ ending in $x_i$, and let $y_i$ be the first vertex of $T_i$. If $y_i=v_1$, then $y_i$ has no predecessor on the path $\mathfrak{P}$. Otherwise, let $q_i$ be the predecessor of $y_i$. Let $B' \subseteq B$ be a maximal subset of $B$ such that $q_i$ exists for each $i \in B'$. We have $|B'| \geq |B|-1$, since $B'$ contains all indices of $B$ except potentially $i=1$. For each $i \in B'$, the vertex $q_i$ has a neighbor in $I \setminus \{u_i\}$ by the definition of $T_i$. Let $u_{t(i)} \in I \setminus \{u_i\}$ be such a neighbor of $q_i$. Then $\{u_{t(i)},q_i,y_i\}$ induces a $P_3$, since $y_i$ has only one neighbor in $I$, namely $u_i$. Let $Y := \{y_i ~|~ i \in B'\}$. We now claim that $Y$ is an independent set.

\bigskip 

\noindent \textbf{Claim 5.} $Y$ is an independent set.

\bigskip

\noindent

Suppose for the sake of obtaining a contradiction that $y_iy_j \in E(G)$ for two distinct integers $i, j \in B'$. Then $\{u_i,y_i,y_j\}$ induces a $P_3$, since $y_j$ has only one neighbor in $I$, namely $u_j$. But then $G[\{u_i,y_i,y_j\} \cup (I \setminus \{u_i,u_j\})]$ contains an induced $P_3+sP_1$, since $k-2 \geq s$. Contradiction. This proves the claim. 

\bigskip

We now show that $q_i$ is adjacent to all but at most $s$ vertices of $Y \setminus \{y_i\}$.

\bigskip

\noindent \textbf{Claim 6.} Let $i \in B'$. Then $q_i$ is adjacent to at least $|Y|-1-s$ vertices in $Y \setminus \{y_i\}$.

\bigskip

\noindent

Recall that $\{u_{t(i)},q_i,y_i\}$ induces a $P_3$. Let $Y'=Y \setminus \{y_{t(i)},y_i\}$ if $t(i) \in B'$ and otherwise let $Y'=Y \setminus \{y_i\}$. Let $j \in B' \setminus \{i,t(i)\}$. Suppose $q_iy_j \notin E(G)$. Then $y_j$ is not adjacent to $q_i$ by assumption, it is not adjacent to $u_{t(i)}$ since $y_j$ has only one neighbor in $I$ (namely, $u_j$) and $y_j$ is not adjacent to $y_i$ because of Claim 5. This proves the claim, because if $q_i$ would be adjacent to strictly fewer than $|Y|-1-s$ vertices in $Y \setminus \{y_i\}$, then $G[\{u_{t(i)},q_i,y_i\} \cup Y']$ would contain an induced $P_3+sP_1$.

\bigskip

Finally, we claim that $|B|$ is bounded by a function of $s$.

\bigskip

\noindent \textbf{Claim 7.} We have $|B| \leq 4s+3$.

\bigskip

\noindent

Suppose for the sake of obtaining a contradiction that $|B| \geq 4s+4$. Then $|B'|=|Y| \geq 4s+3$ by $|B'| \ge |B| - 1$. We will now build an auxiliary graph $A$ on the vertex set $Y$. There is an edge between $y_i$ and $y_j$ in $A$ if and only if $q_iy_j \in E(G)$ and $q_jy_i \in E(G)$. By Claim 6, each $q_i$ ($i \in B'$) is adjacent to at least $|Y|-s$ vertices in $Y$. So the number of ordered pairs $(i,j)$ such that $i\neq j$ and $i,j \in B'$ and $q_iy_j \notin E(G)$ is at most $s|Y|$. Now $y_iy_j \notin E(A)$ if and only if $q_iy_j \notin E(G)$ or $q_jy_i \notin E(G)$. Therefore, $|E(A)| \geq \binom{|Y|}{2}-s|Y|$. Since $|Y| \geq 4s+3$, we have $\binom{|Y|}{2}-s|Y|>\frac{|Y|^2}{4}$, because $\binom{|Y|}{2}-s|Y|-\frac{|Y|^2}{4}=\frac{|Y|(|Y|-4s-2)}{4}>0$. By Mantel's Theorem, $A$ contains a triangle between three vertices $y_{\alpha}, y_{\beta}, y_{\gamma}$, where $\alpha$, $\beta$ and $\gamma$ are distinct elements in $B'$. This means that $E(G)$ contains the six edges $q_{\alpha}y_{\beta}, q_{\alpha}y_{\gamma}, q_{\beta}y_{\alpha}, q_{\beta}y_{\gamma},
q_{\gamma}y_{\alpha}, q_{\gamma}y_{\beta}$. Now relabel these indices as $i,j,k$ according to their order on $\mathfrak{P}$ (i.e., such that $q_i, y_i \ldots, q_j, y_j, \ldots, q_k, y_k$ appear in that order on $\mathfrak{P}$). This means that $\mathfrak{P}$ has the form $L, q_i, y_i,C, q_j, y_j, D, q_k, y_k, R$, where $L, C, D$ and $R$ are (potentially empty) subpaths of $\mathfrak{P}$. Replacing the subpath $q_i, y_i, C, q_j, y_j, D, q_k, y_k$
of $\mathfrak{P}$ by $q_i, y_j, D, q_k, y_i, C, q_j, y_k$
and leaving the rest of $\mathfrak{P}$ unchanged gives a second Hamiltonian $v_1v_n$-path $\mathfrak{P}'$ in $G$ (see Fig.~\ref{fig:claim7-rerouting}). This leads to a contradiction and proves the claim.

\begin{figure}[ht]
\centering
\begin{tikzpicture}[
    x=1.3cm,y=1cm,
    block/.style={draw,rounded corners,minimum width=1.0cm,minimum height=.55cm},
    dot/.style={circle,fill,inner sep=1.4pt},
    >=Latex,scale=0.8
]
    \node[block] (L)  at (0,0) {$L$};
    \node[dot,label=below:{$q_i$}] (qi) at (1.5,0) {};
    \node[dot,label=below:{$y_i$}] (yi) at (2.5,0) {};
    \node[block] (C)  at (4,0) {$C$};
    \node[dot,label=below:{$q_j$}] (qj) at (5.5,0) {};
    \node[dot,label=below:{$y_j$}] (yj) at (6.5,0) {};
    \node[block] (D)  at (8,0) {$D$};
    \node[dot,label=below:{$q_k$}] (qk) at (9.5,0) {};
    \node[dot,label=below:{$y_k$}] (yk) at (10.5,0) {};
    \node[block] (R)  at (12,0) {$R$};

    \draw[ultra thick] (L) -- (qi) -- (yi) -- (C) -- (qj) -- (yj) -- (D) -- (qk) -- (yk) -- (R);

    \draw[blue] (qi) to[bend left=22] (yj);
    \draw[blue] (qk) to[bend right=22] (yi);
    \draw[blue] (qj) to[bend left=22] (yk);

    \begin{scope}[yshift=-100.0]
    
    \node[block] (L)  at (0,0) {$L$};
    \node[dot,label=below:{$q_i$}] (qi) at (1.5,0) {};
    \node[dot,label=below:{$y_i$}] (yi) at (2.5,0) {};
    \node[block] (C)  at (4,0) {$C$};
    \node[dot,label=below:{$q_j$}] (qj) at (5.5,0) {};
    \node[dot,label=below:{$y_j$}] (yj) at (6.5,0) {};
    \node[block] (D)  at (8,0) {$D$};
    \node[dot,label=below:{$q_k$}] (qk) at (9.5,0) {};
    \node[dot,label=below:{$y_k$}] (yk) at (10.5,0) {};
    \node[block] (R)  at (12,0) {$R$};

    \draw[ultra thick,red] (L) -- (qi);
    \draw[ultra thick,red] (yi) -- (C) -- (qj);
    \draw[ultra thick,red] (yj) -- (D) -- (qk);
    \draw[ultra thick,red] (yk) -- (R);
    \draw[] (qi) -- (yi);
    \draw[]  (qj) -- (yj);
    \draw[]  (qk) -- (yk);
    
    \draw[ultra thick,red] (qi) to[bend left=22] (yj);
    \draw[ultra thick,red] (qk) to[bend right=22] (yi);
    \draw[ultra thick,red] (qj) to[bend left=22] (yk);
    
    \end{scope}

    \node[draw=none,fill=none] at (6,-1.5) {$\rightarrow$};
\end{tikzpicture}
\caption{The rerouting used in Claim~7. The blue edges are guaranteed by the triangle in the auxiliary graph $A$ and they yield a second Hamiltonian $v_1v_n$-path.}
\label{fig:claim7-rerouting}
\end{figure}

\bigskip

By combining Claim 4 and Claim 7, we get $k=\alpha(G) \leq 5s+6$, as desired.
\end{proof}

If $G$ is a $P_3+sP_1$-free graph with a unique Hamiltonian path between $a, b \in V(G)$, then $G$ is $(5s+7)P_{1}$-free by Theorem~\ref{th:mainTheorem}. By combining this fact with Theorem~\ref{th:sP1} and Lemma~\ref{lem:pathToCycles}, we get the following corollaries.

\begin{corollary}
\label{cor:P3PlussP12Switch}
    Let $s \geq 0$ be an integer and let $G$ be a $P_3+sP_1$-free graph on $R(5s+7,5s+10)$ vertices having a Hamiltonian path $\mathfrak{P}$ between two distinct vertices $a, b \in V(G)$. Then $G$ has a second Hamiltonian $ab$-path.
\end{corollary}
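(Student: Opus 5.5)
Suppose, for contradiction, that $G$ is a $P_3+sP_1$-free graph on exactly $R(5s+7,5s+10)$ vertices with a Hamiltonian $ab$-path $\mathfrak{P}$ and no second Hamiltonian $ab$-path. The plan is to derive from this that $G$ has bounded independence number, and then apply Theorem~\ref{th:sP1} with the parameter $5s+7$.

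First I dispose of the degenerate endpoint situation. We have $a\neq b$ by hypothesis. The Ramsey number $R(5s+7,5s+10)$ is certainly at least $2$, so the statement is not vacuous, and in fact it exceeds $5s+10 \ge 10$.

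Next, $\mathfrak{P}$ is the unique Hamiltonian path between its endpoints. So Theorem~\ref{th:mainTheorem}, applied with $v_1=a$ and $v_n=b$, gives $\alpha(G)\le 5s+6$. Equivalently, $G$ contains no independent set of size $5s+7$, i.e.\ $G$ is $(5s+7)P_1$-free.

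Now I apply Theorem~\ref{th:sP1} with $5s+7$ in place of $s$; this is legitimate since $5s+7\ge 1$. The graph $G$ is $(5s+7)P_1$-free, has exactly $R(5s+7,(5s+7)+3)=R(5s+7,5s+10)$ vertices, and has the Hamiltonian $ab$-path $\mathfrak{P}$. The theorem therefore yields a second Hamiltonian $ab$-path $\mathfrak{P}'$, obtained from $\mathfrak{P}$ by a $2$-switch. This contradicts uniqueness and proves the corollary.

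The argument is essentially a composition of the two theorems, so there is no real obstacle. The only point to check is that the vertex count matches the hypothesis of Theorem~\ref{th:sP1} exactly, which it does because $5s+10=(5s+7)+3$.
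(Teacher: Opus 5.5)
Your proof is correct and is exactly the paper's argument: uniqueness of the $ab$-path forces $\alpha(G)\le 5s+6$ by Theorem~\ref{th:mainTheorem}, so $G$ is $(5s+7)P_1$-free, and Theorem~\ref{th:sP1} with parameter $5s+7$ (using $R(5s+7,(5s+7)+3)=R(5s+7,5s+10)$) then yields a second Hamiltonian $ab$-path, a contradiction. Your preliminary remark about the size of the Ramsey number is harmless but not needed.
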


\begin{corollary}
\label{cor:countingsP3PlussP1}
Let $s \geq 0$ be an integer and let $G$ be a $P_3+sP_1$-free graph on $n$ vertices having two vertices $a,b \in V(G)$ between which there exists a Hamiltonian path. Then $G$ contains at least $2^{\left\lfloor \frac{n}{R(5s+7,5s+10)}\right\rfloor}$ distinct Hamiltonian $ab$-paths.
\end{corollary}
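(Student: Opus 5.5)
The plan is to combine Corollary~\ref{cor:P3PlussP12Switch} with Lemma~\ref{lem:pathToCycles}, taking the hereditary class $\mathcal{G}$ to be the class of $P_3+sP_1$-free graphs. The relevant parameters are $n' := R(5s+7,5s+10)$ and $c := 2$.

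First I would check that $\mathcal{G}$ is hereditary. This was already observed: the class of $F$-free graphs is hereditary for every $F$.

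Next I would verify the hypothesis of Lemma~\ref{lem:pathToCycles}. Take any $G' \in \mathcal{G}$ on exactly $n'$ vertices that has a Hamiltonian path between two distinct vertices $a',b'$. Corollary~\ref{cor:P3PlussP12Switch} gives a second Hamiltonian $a'b'$-path in $G'$. So $G'$ has at least two Hamiltonian $a'b'$-paths, and the hypothesis holds with $c=2$.

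Recall where this corollary comes from. Theorem~\ref{th:mainTheorem} says that a graph in $\mathcal{G}$ with a unique Hamiltonian $a'b'$-path is $(5s+7)P_1$-free. Theorem~\ref{th:sP1}, applied with parameter $5s+7$ on $R(5s+7,5s+10)$ vertices, then produces a second path by a $2$-switch, which is a contradiction.

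Now apply Lemma~\ref{lem:pathToCycles} to the given $G$, which has a Hamiltonian $ab$-path. This yields at least $2^{\lfloor n/R(5s+7,5s+10)\rfloor}$ distinct Hamiltonian $ab$-paths, which is the claim.

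There is one small point to handle. The lemma is applied to blocks of $n'$ consecutive vertices of a fixed Hamiltonian path, and each block's two endpoints are distinct vertices because $n'\ge 2$. If $n<n'$, the exponent is $0$ and the bound of $1$ path is trivial.

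The only genuine work is the reduction inside Corollary~\ref{cor:P3PlussP12Switch}, and it has already been carried out. For $s=0$ I would double-check that Theorem~\ref{th:mainTheorem} gives $\alpha\le 1 < 7$, which is consistent. I therefore expect no real obstacle beyond confirming that the replacements in Lemma~\ref{lem:pathToCycles} on disjoint blocks produce pairwise distinct paths. This holds because each block contributes an independent choice of $c$ distinct subpaths on its own vertex set.
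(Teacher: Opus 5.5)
Your proposal is correct and is essentially the paper's own argument. The paper likewise uses Theorem~\ref{th:mainTheorem} to make a graph with a unique Hamiltonian $ab$-path $(5s+7)P_1$-free, applies Theorem~\ref{th:sP1} with parameter $5s+7$ (packaged as Corollary~\ref{cor:P3PlussP12Switch}) to get $c=2$ on $n'=R(5s+7,5s+10)$ vertices, and then invokes Lemma~\ref{lem:pathToCycles} for the hereditary class of $P_3+sP_1$-free graphs.
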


The following observation follows from the classical work of Erd{\H{o}}s~\cite{E47} and Erd{\H{o}}s-Szekeres~\cite{ES35} and shows that we were indeed able to improve the exponent from Section~\ref{sec:expCrudeBound}.
\begin{observation}
\label{obs:Ramsey2ThetaS}
    We have $R(5s+7,5s+10)=2^{\Theta(s)}$.
\end{observation}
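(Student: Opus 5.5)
The plan is to prove matching upper and lower bounds on $\log_2 R(5s+7,5s+10)$, each linear in $s$, using the two classical Ramsey estimates.

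For the upper bound, I would use the Erd\H{o}s--Szekeres recursion $R(a,b)\le R(a-1,b)+R(a,b-1)$ with $R(1,b)=R(a,1)=1$. This gives
\[
R(a,b)\le \binom{a+b-2}{a-1}\le 2^{a+b-2}.
\]
Taking $a=5s+7$ and $b=5s+10$ yields $R(5s+7,5s+10)\le 2^{10s+15}=2^{O(s)}$.

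For the lower bound, I would first use monotonicity: an independent set of size $5s+7$ or a clique of size $5s+10$ certainly contains an independent set or a clique of size $5s+7$. Hence
\[
R(5s+7,5s+10)\ge R(5s+7,5s+7).
\]
I would then invoke Erd\H{o}s' probabilistic bound for the diagonal Ramsey number: for every $k\ge 3$ we have $R(k,k)>2^{k/2}$. The standard argument takes $G(n,1/2)$ with $n=\lfloor 2^{k/2}\rfloor$. The expected number of cliques plus independent sets on $k$ vertices is $2\binom{n}{k}2^{-\binom{k}{2}}<1$, so some graph on $n$ vertices has neither. With $k=5s+7\ge 7$ this gives $R(5s+7,5s+10)>2^{(5s+7)/2}=2^{\Omega(s)}$.

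Combining the two bounds, $\log_2 R(5s+7,5s+10)=\Theta(s)$, which is exactly the claim. Two points need checking. First, the Erd\H{o}s bound holds at every $k\ge 3$ and not only asymptotically; the inequality $2\binom{n}{k}2^{-\binom{k}{2}}\le \frac{2n^k}{k!}2^{-k(k-1)/2}<1$ for $n\le 2^{k/2}$ requires $2^{1+k/2}<k!$, which holds for $k\ge 3$. Second, the hidden constants are uniform in $s\ge 0$. Neither step is a real obstacle, since both follow directly from the cited classical results. The only care needed is the monotonicity reduction from the off-diagonal to the diagonal Ramsey number.
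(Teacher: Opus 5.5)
Your proposal is correct and follows the same route as the paper. The paper justifies the observation only by citing Erd\H{o}s--Szekeres for the upper bound $R(a,b)\le\binom{a+b-2}{a-1}\le 2^{10s+15}$ and Erd\H{o}s's probabilistic bound $R(k,k)>2^{k/2}$ for the lower bound, and you have filled in the details it leaves implicit, including the monotonicity step $R(5s+7,5s+10)\ge R(5s+7,5s+7)$.
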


By summarizing Observation~\ref{obs:infFam1}, Observation~\ref{obs:infFam2}, Observation~\ref{obs:infFam3}, Corollary~\ref{cor:countingsP3PlussP1} and Observation~\ref{obs:Ramsey2ThetaS}, we obtain Theorem~\ref{th:summary} from the Introduction.

\SummaryTheorem*

\section{Number of Hamiltonian cycles in Hamiltonian $F$-free graphs and algorithmic consequences}
\label{sec:hamCycles}

Another natural question to ask is the following one: Given a Hamiltonian $F$-free graph on $n$ vertices, how many Hamiltonian cycles must this graph have (in terms of $n$)? Since a Hamiltonian graph has a Hamiltonian path between two adjacent vertices, Observation~\ref{obs:infFam1}, Observation~\ref{obs:infFam2}, Corollary~\ref{cor:countingsP3PlussP1}, and Observation~\ref{obs:Ramsey2ThetaS} yield the following corollary.

\begin{corollary}
    Let $F$ be a graph. If $F$ is not an induced subgraph of $P_4+sP_1$ 
    for any integer $s \geq 0$, then there exists an infinite family of uniquely Hamiltonian $F$-free graphs. On the other hand, if $F$ is an induced subgraph of $P_3+sP_1$ for some integer $s \geq 0$, then there exists a constant $c_s=2^{\Theta(s)}$ such that every Hamiltonian $F$-free graph contains at least $2^{\left\lfloor \frac{n}{c_s} \right\rfloor}$ distinct Hamiltonian cycles.
\end{corollary}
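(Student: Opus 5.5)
The plan is to handle the two halves separately. The negative half reduces to the observations of Section~2 through a short case analysis on the structure of $F$. The positive half reduces to Corollary~\ref{cor:countingsP3PlussP1} by cutting a Hamiltonian cycle at one of its edges.

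For the first half, let $F$ be a graph that is not an induced subgraph of $P_4+sP_1$ for any $s\ge 0$.
\begin{itemize}
\item If $F$ contains a cycle or a vertex with at least three neighbours, Observation~\ref{obs:infFam1} gives the required family of uniquely Hamiltonian $F$-free graphs.
\item Otherwise $F$ has maximum degree at most $2$ and no cycle, so $F$ is a linear forest, i.e.\ a disjoint union of paths.
\item Suppose at most one component of $F$ contains an edge. Since $F$ is not an induced subgraph of $P_4+sP_1$, that component is a path $P_k$ with $k\ge 5$. The edges $v_1v_2$ and $v_4v_5$ of such a path induce a $2P_2$.
\item Suppose instead that two components of $F$ each contain an edge. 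Choosing one edge from each again gives an induced $2P_2$.
\end{itemize}
In both of the last two cases $2P_2$ is an induced subgraph of $F$, so every $2P_2$-free graph is $F$-free. The family of Observation~\ref{obs:infFam2} therefore works.

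For the second half, let $F$ be an induced subgraph of $P_3+sP_1$. Then every $F$-free graph is $P_3+sP_1$-free. Let $G$ be a Hamiltonian $F$-free graph on $n$ vertices with a Hamiltonian cycle $\mathfrak{C}$.
\begin{itemize}
\item If $n\le 2$, the claimed bound is at most $1$ once $c_s\ge 3$, which holds for Ramsey numbers of this size, so this case is trivial. Assume $n\ge 3$.
\item Pick an edge $ab\in E(\mathfrak{C})$. Then $\mathfrak{C}-ab$ is a Hamiltonian $ab$-path in $G$.
\item By Corollary~\ref{cor:countingsP3PlussP1}, $G$ has at least $2^{\lfloor n/R(5s+7,5s+10)\rfloor}$ distinct Hamiltonian $ab$-paths.
\item Adding the edge $ab$ to each such path gives a Hamiltonian cycle.
\item The map is injective: the path is recovered by deleting $ab$ from the cycle, and since $n\ge 3$ the edge $ab$ is never already on the path.
\end{itemize}
Taking $c_s:=R(5s+7,5s+10)$, which is $2^{\Theta(s)}$ by Observation~\ref{obs:Ramsey2ThetaS}, yields the bound.

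The only step needing real care is the case analysis in the first half. There one must check that every linear forest which is not an induced subgraph of $P_4+sP_1$ contains an induced $2P_2$. The remaining steps are direct applications of earlier results.
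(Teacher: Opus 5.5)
Your proposal is correct and takes essentially the same route as the paper, which derives the corollary directly from Observations~\ref{obs:infFam1} and~\ref{obs:infFam2}, Corollary~\ref{cor:countingsP3PlussP1} and Observation~\ref{obs:Ramsey2ThetaS}, using the fact that a Hamiltonian cycle contains a Hamiltonian path between two adjacent vertices $a$ and $b$. You also supply two details the paper leaves implicit: the check that every linear forest which is not an induced subgraph of $P_4+sP_1$ contains an induced $2P_2$, and the injectivity of closing Hamiltonian $ab$-paths into Hamiltonian cycles via the edge $ab$.
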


Hence, the only case which is open is the case where $F=P_4+sP_1$ for some integer $s \geq 0$. We remark that for $s=0$, there are no uniquely Hamiltonian $P_4$-free graphs on $n \geq 5$ vertices.

\begin{observation}
    Let $G$ be a Hamiltonian $P_4$-free graph on $n \geq 5$ vertices. Then $G$ is not uniquely Hamiltonian.
\end{observation}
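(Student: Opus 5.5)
The plan is to exploit the cograph structure: a connected $P_4$-free graph on at least two vertices is the join of two smaller graphs. Since $G$ is Hamiltonian it is connected, so $V(G)$ splits into nonempty sets $A,B$ with every $A$--$B$ pair adjacent. Fix the Hamiltonian cycle $\mathfrak{C}=v_1v_2\cdots v_nv_1$, indices taken cyclically. The basic tool is the cycle version of a $2$-switch. If $v_iv_{i+1}$ and $v_jv_{j+1}$ are disjoint cycle edges with $v_iv_j,v_{i+1}v_{j+1}\in E(G)$, then replacing the two cycle edges by these two edges gives a new Hamiltonian cycle. It differs from $\mathfrak{C}$ because the disjointness forces $v_iv_j\notin E(\mathfrak{C})$.

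\textbf{Step 1: many transitions.} Orient $\mathfrak{C}$ and label each cycle edge by the sides of its tail and head: $AA$, $BB$, $AB$ or $BA$. Cyclically there are equally many $AB$ and $BA$ edges. Take an $AB$ edge $v_iv_{i+1}$ and a $BA$ edge $v_jv_{j+1}$ that are disjoint. Then $v_i\in A$, $v_j\in B$, $v_{i+1}\in B$ and $v_{j+1}\in A$, so both switching edges cross the join and the $2$-switch applies. If there are at least two $AB$ edges (hence at least two $BA$ edges), such a disjoint pair exists. An $AB$ edge meets only the $BA$ edges immediately before and after it on $\mathfrak{C}$, so with two of each some pair is disjoint; I will check this short counting carefully.

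\textbf{Step 2: one transition each way.} Now $A$ and $B$ each occupy a single arc of $\mathfrak{C}$. If $|A|,|B|\ge 2$, there is an $AA$ edge $v_iv_{i+1}$ and a $BB$ edge $v_jv_{j+1}$. They are disjoint, and again $v_iv_j$ and $v_{i+1}v_{j+1}$ cross the join, so a $2$-switch works.

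\textbf{Step 3: one side is a single vertex (main obstacle).} Say $A=\{a\}$. Then $a$ is universal, and Hamiltonian cycles of $G$ correspond exactly to Hamiltonian paths of $H:=G-a$ with arbitrary endpoints. Thus it suffices to find a second Hamiltonian path of $H$, distinct as an edge set, in the $P_4$-free graph $H$ on $n-1\ge 4$ vertices. Write the path as $v_2v_3\cdots v_n$. The key case split uses $P_4$-freeness on windows of four consecutive vertices, each of which must contain a chord.
\begin{itemize}
\item Any chord at an endpoint yields a new path. If $v_2v_j\in E(G)$ with $j\ge 4$, then $v_{j-1}\cdots v_2v_jv_{j+1}\cdots v_n$ is a new Hamiltonian path; the chord $v_jv_n$ is symmetric.
\item Two overlapping short chords $v_iv_{i+2}$ and $v_{i+1}v_{i+3}$ give the swap $\cdots v_iv_{i+2}v_{i+1}v_{i+3}\cdots$.
\end{itemize}
If $n-1=4$, the single window either has an endpoint chord or the chord $v_3v_5$ ending at the endpoint $v_5$; both are handled by the first item. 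If $n-1\ge 5$, look at the windows $v_2v_3v_4v_5$ and $v_3v_4v_5v_6$. If neither produces an endpoint chord, the first window forces $v_3v_5$. The second window forces $v_3v_6$, $v_3v_5$ or $v_4v_6$. The choice $v_4v_6$ completes an overlapping pair with $v_3v_5$.

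The remaining case, where only $v_3v_5$ and perhaps $v_3v_6$ occur, is where I expect the real work. There I would slide the window along the path to the last window at $v_n$ and use the endpoint-chord trick there. This needs an induction showing that a chain of chords, each avoiding the endpoints and never overlapping, cannot persist along the whole path without producing an induced $P_4$. For instance, $v_2v_3v_4v_6$ would be an induced $P_4$ unless some extra chord exists, and I would chase such forced extra chords until one of the two good configurations appears.
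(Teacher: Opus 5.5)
Steps 1 and 2 are correct. Your overall route, the join decomposition of the connected cograph $G$ plus $2$-switches across the join, is genuinely different from the paper's. The paper takes two vertices $v_i,v_j$ with $N(v_i)=N(v_j)$ or $N[v_i]=N[v_j]$ (a standard cograph fact) and swaps them on $\mathfrak{C}$.

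In Step 1, the counting you promised to check is exactly where $n\ge 5$ is used. With two $AB$ and two $BA$ edges, every $AB$ edge meets both $BA$ edges only if all four monochromatic arcs are single vertices, i.e.\ $n=4$. Indeed $C_4=K_{2,2}$ is uniquely Hamiltonian, so this case really needs the hypothesis.

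The gap is Step 3. The case you leave for ``real work'' is not handled, and the mechanism you propose cannot handle it as set up.

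\emph{Why your plan stalls.} Once $v_3v_5\in E(G)$, the window $v_3v_4v_5v_6$ already contains a chord. So sliding windows of four \emph{consecutive} vertices forces nothing further. The quadruple $v_2v_3v_4v_6$ you mention is not even a path unless $v_4v_6\in E(G)$. The forcing has to come from non-consecutive quadruples that become paths through previously forced chords.

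\emph{The missing argument.} Assume there is no endpoint chord at $v_2$, so $v_3$ is its only neighbor in $H$. Then $v_3v_j\in E(G)$ for all $4\le j\le n$, by induction on $j$:
\begin{itemize}
\item for $j=4$, $v_3v_4$ is a path edge;
\item for $j\ge 5$, $v_2v_3v_{j-1}v_j$ is a path (using the already forced $v_3v_{j-1}$) with $v_2v_{j-1},v_2v_j\notin E(G)$, so $P_4$-freeness forces $v_3v_j$.
\end{itemize}
For $j=n$ this is a chord at the endpoint $v_n$, and $3\le n-2$. Hence $v_2v_3v_nv_{n-1}\cdots v_4$ is a second Hamiltonian path of $H$.

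Alternatively, apply your join decomposition to $H$ itself. In a connected cograph the unique neighbor of a degree-one vertex is universal. So $v_2$ and $v_n$ cannot both have degree one in $H$ unless $v_3=v_{n-1}$, i.e.\ $|V(H)|=3$.

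Either way, an endpoint chord always exists, and the overlapping-chord configuration is never needed.

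With this repair your proof is complete, and it gives slightly more than the paper's. The second Hamiltonian cycle is always a $2$-switch of $\mathfrak{C}$: the rotation in Step 3 is the switch replacing $v_3v_4$ and $v_nv_1$ by $v_3v_n$ and $v_4v_1$. A twin swap, by contrast, may change four edges. The paper's argument is shorter once the twin lemma is granted.
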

\begin{proof}
    Let $\mathfrak{C}=v_1 v_2 \ldots v_n v_1$ be a Hamiltonian cycle in $G$. Since $G$ is $P_4$-free, it is well-known that $G$ contains two distinct vertices $v_i, v_j \in V(G)$ (with $i<j$) such that $N(v_i)=N(v_j)$ or $N[v_i]=N[v_j]$ (i.e., either the open or closed neighborhoods are identical)~\cite{CLB81}. But then the cycle obtained by swapping $v_i$ and $v_j$, namely $\mathfrak{C}'=v_1 v_2 \ldots v_{i-1} v_j v_{i+1} \ldots v_{j-1} v_i v_{j+1} \ldots v_n v_1$ yields a second Hamiltonian cycle.
\end{proof}




We also wish to state some algorithmic consequences of our results. The problem \textsc{Second Hamiltonian cycle} takes as input a graph $G$ and a Hamiltonian cycle in $G$ and asks to produce a second Hamiltonian cycle in $G$ or state that it does not exist. This problem was popularized by Papadimitriou~\cite{P94} and mainly received attention for classes of graphs where a second Hamiltonian cycle is guaranteed to exist. For example, Hamiltonian graphs in which every vertex has an odd degree are not uniquely Hamiltonian due to a theorem of Smith (see~\cite{T46}). The \textsc{Second Hamiltonian cycle} problem received attention for example for the class of cubic Hamiltonian graphs~\cite{DMSZ20} and the class of bipartite Pfaffian graphs of minimum degree at least three~\cite{BKN24}. We consider the restriction to $F$-free graphs: the problem \textsc{Second Hamiltonian cycle in $F$-free graphs} takes as input an $F$-free graph $G$ and a Hamiltonian cycle in $G$ and asks to produce a second Hamiltonian cycle in $G$ or state that it does not exist. Theorem~\ref{th:sP1} has the following algorithmic consequence.

\begin{proposition}
    Let $s \geq 1$ be an integer. The problem \textsc{Second Hamiltonian cycle in $sP_1$-free graphs} can be solved in $O(|V(G)|+|E(G)|)$ time, where $G$ is the input graph.
\end{proposition}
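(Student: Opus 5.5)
The algorithm exploits Theorem~\ref{th:sP1} together with the fact that $s$ is fixed, so $N := R(s,s+3)$ is a constant.

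Let the input be an $sP_1$-free graph $G$ with a Hamiltonian cycle $\mathfrak{C} = v_1v_2\ldots v_nv_1$, and put $m := |E(G)|$.

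\textbf{Small graphs.} If $n < N+2$, the graph has constant size. We decide whether a second Hamiltonian cycle exists, and find one if it does, by exhaustive search over all at most $(n-1)!/2$ cyclic orderings. This takes $O(1)$ time.

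\textbf{Large graphs: reduction to a path.} Otherwise, delete the edge $v_nv_1$ from $\mathfrak{C}$. This leaves the Hamiltonian $v_1v_n$-path $\mathfrak{P} = v_1\ldots v_n$.

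\textbf{Choosing a window.} Consider the window $W := \{v_2,\ldots,v_{N+1}\}$. The subpath $v_2\ldots v_{N+1}$ is a Hamiltonian $v_2v_{N+1}$-path of $G[W]$. Since $sP_1$-freeness is hereditary, $G[W]$ is $sP_1$-free on exactly $R(s,s+3)$ vertices. Theorem~\ref{th:sP1} therefore provides a second Hamiltonian $v_2v_{N+1}$-path in $G[W]$, obtained by a $2$-switch.

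\textbf{Splicing.} Substitute this path into $\mathfrak{C}$ in place of $v_2\ldots v_{N+1}$. The result is a Hamiltonian cycle of $G$ that differs from $\mathfrak{C}$. In particular, a large input always has a second Hamiltonian cycle, so the answer is never ``none'' in this case.

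\textbf{Finding the switch efficiently.} We do not trust the theorem blindly; we locate the switch directly. Adjacency inside the window must be tested in $O(1)$ per query, without building an adjacency matrix of all of $G$. To do this:
\begin{itemize}
\item record the index of each vertex along $\mathfrak{C}$ in an array (linear time);
\item scan the adjacency lists of the $N$ window vertices, keeping only neighbours whose index lies in $[2,N+1]$;
\item store these in an $N\times N$ table.
\end{itemize}
This costs $O(n+m)$ overall, and reading the input already costs that much. We then try all $O(N^2)$ pairs $(i,j)$ with $i<j-1$ for a $2$-switch inside the window, which is $O(1)$ work. Rebuilding the new cycle takes $O(n)$ time.

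\textbf{Main obstacle.} The delicate point is the interface with Theorem~\ref{th:sP1}. That theorem needs its two prescribed endpoints to be distinct, and the graph to have exactly $R(s,s+3)$ vertices. Both conditions are secured by taking a window of consecutive cycle vertices that does not wrap around, which is why we require $n \ge N+2$.

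We must also check that splicing genuinely gives a different cycle. The new cycle uses the window path in place of the old segment. Since the two window paths differ as edge sets, the two cycles differ as edge sets.

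Finally, for $s=1$ the class contains only the empty graph, which is trivial. For $s=2$ the graphs are complete, and the general argument still applies.
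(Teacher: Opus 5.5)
Your proposal is correct and follows the paper's proof: brute-force search below a constant threshold, and otherwise Theorem~\ref{th:sP1} applied to a window of $R(s,s+3)$ consecutive cycle vertices, which yields a second Hamiltonian cycle via a $2$-switch that can be found in linear time. The only differences are cosmetic. The paper uses the explicit Erd\H{o}s--Szekeres bound $R(s,s+3)<4^{s+2}$ as the threshold, which makes the constant computable, and it finds the switch by scanning all consecutive pairs $v_i,v_{i+1}$ rather than building a local adjacency table for a single window.
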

\begin{proof}
  Let $G$ be the graph from the input (on $n$ vertices) and let $\mathfrak{C}:=v_1 v_2 \ldots v_n v_1$ be the Hamiltonian cycle from the input. By Erd{\H{o}}s-Szekeres~\cite{ES35} we have $R(s,s+3)<4^{s+2}$. We now describe an algorithm for solving \textsc{Second Hamiltonian cycle in $sP_1$-free graphs} in $O(|V(G)|+|E(G)|)$ time. If $n<4^{s+2}$, the algorithm can exhaustively search over all Hamiltonian cycles in $G$ using a brute-force approach. Since $s$ is a constant, this can be done in constant time. If $n \geq 4^{s+2}$, consider $R(s,s+3)$ consecutive vertices on the given Hamiltonian cycle, viewed as a Hamiltonian path in the induced subgraph on those vertices. This induced subgraph is still $sP_1$-free. By Theorem~\ref{th:sP1}, this induced subgraph has an alternative Hamiltonian path between its endpoints obtained by a 2-switch. This also gives rise to a second Hamiltonian cycle $\mathfrak{C}'$ in $G$, obtained from $\mathfrak{C}$ by a 2-switch. Therefore, the algorithm can iterate over each consecutive pair of vertices $v_i, v_{i+1}$, record all edges incident with $v_i$ and $v_{i+1}$ and verify whether there exists a pair of edges $(v_iv_j,v_{i+1}v_{j+1})$ to construct~$\mathfrak{C}'$.
\end{proof}

\section{Opportunities for future work}
We conclude this paper by listing directions for future work that spark our interest. The first problem is related to the asymptotic behavior of the minimal number of distinct Hamiltonian paths between two vertices with at least one such path in $F$-free graphs.

\begin{problem}
Let $s \geq 0$ be an integer and let $F$ be an induced subgraph of $P_3+sP_1$.
For an $n$-vertex graph $G$ and vertices $a,b\in V(G)$, let $h_{a,b}(G)$ denote the
number of Hamiltonian paths in $G$ with endpoints $a$ and $b$. Define the quantity
$$
m_F(n)=
\min \bigl\{
h_{a,b}(G):
|V(G)|=n,\; G \text{ is } F\text{-free},\;a,b \in V(G),\;
h_{a,b}(G)\ge 1
\bigr\}.
$$
Determine a function $g(F,n)$ such that
$$
m_F(n)=\Theta(g(F,n)).
$$
\end{problem}

In the current paper, we consider $F$-free graphs. The next natural step is to consider similar questions for graphs that are $F_i$-free for each graph $F_i$ in a family of forbidden graphs $\mathcal{F}$.
\begin{question}
    For which families of graphs $\mathcal{F}$ does there exist an infinite family of graphs $\mathcal{G}$ such that each graph $G \in \mathcal{G}$ has two vertices between which there exists a unique Hamiltonian path and such that $G$ is $F_i$-free for every $F_i \in \mathcal{F}$?
\end{question}

In the current paper, we gave a complete dichotomy for the case where $|\mathcal{F}|=1$ and the next natural target would be to consider the case where $|\mathcal{F}|=2$.

Finally, motivated by our observations in Section~\ref{sec:hamCycles} we formulate the following conjecture.
\begin{conjecture}
    Let $s \geq 1$ be an integer and let $G$ be a Hamiltonian $P_4+sP_1$-free graph on $n$ vertices. There exists a constant $n_s$ depending on $s$ such that $G$ is not uniquely Hamiltonian if $n \geq n_s$.
\end{conjecture}

\section*{Acknowledgements}
\noindent Jorik Jooken is supported by a Postdoctoral Fellowship of the Research Foundation Flanders (FWO) with grant number 1222524N. 


\end{document}